\documentclass[mathpazo]{cicp}
\usepackage{caption}
\usepackage{url,lineno,subcaption}
\usepackage{amsmath}
\usepackage{bm}
\usepackage{CJK}
\usepackage{enumerate}
\usepackage{color} 
\usepackage{amssymb}
\usepackage{geometry}
\usepackage{amsfonts}
\usepackage{indentfirst}
\usepackage{mathtools}
\usepackage{setspace}
\usepackage[ruled,linesnumbered]{algorithm2e}
\usepackage{graphicx}

\usepackage{booktabs} 
\usepackage{multirow} 

\usepackage{comment}
	\usepackage{amsthm}

\usepackage{microtype}
\usepackage{hyperref}

\allowdisplaybreaks[4]

\begin{document}

\title{Physics-Informed Neural Networks for Biot's Model via Fixed-Stress Splitting and Energy Natural Gradient Descent}


\author[Sun K X et.~al.]{Kexin Sun\affil{1},
      Qiang Liu\affil{2}, Minfu Feng\affil{1}, and Mingchao Cai\affil{3}\comma\corrauth}
\address{\affilnum{1}\ School of Mathematics,
         Sichuan University,
         Chengdu, Sichuan 610064, P.R. China. \\
          \affilnum{2}\ School of Mathematical Sciences, 
          Shenzhen University, Shenzhen, Guangdong 518055, P.R. China. \\
         \affilnum{3}\ Department of Mathematics, Morgan State University, Baltimore, MD 21251, USA.}
\emails{{\tt sunkexinkk@outlook.com} (K.~Sun), {\tt matliu@szu.edu.cn} (Q.~Liu),
         {\tt fmf@scu.edu.cn} (M.~Feng), {\tt Mingchao.Cai@morgan.edu} (M.~Cai)}

\begin{abstract}
Physics-Informed Neural Networks (PINNs) have recently gained considerable attention as 
a mesh-free framework for solving partial differential equations. Nevertheless, their performance 
deteriorates when applied to strongly coupled multiphysics systems, such as Biot’s consolidation model, 
due to severely ill-conditioned optimization landscapes. In this work, we propose a robust PINN-based solver, 
termed FS-ENGD-PINN, which synergistically integrates physics-based decoupling with geometry-aware optimization. 
Specifically, the Fixed-Stress (FS) splitting scheme is employed to decompose the coupled poroelastic system 
into contractive mechanics and flow subproblems, thereby significantly improving training stability and convergence. 
To further accelerate optimization, we adopt Energy Natural Gradient Descent (ENGD), which approximates the 
Newton direction in function space effectively mitigates stiffness-induced slow convergence. Moreover, 
to address volumetric locking arising in the nearly incompressible regime, we 
incorporate a three-field mixed formulation with an additional total pressure variable into the PINN framework. 
Extensive numerical experiments demonstrate that the proposed FS-ENGD-PINN consistently outperforms 
standard PINN formulations in terms of accuracy and robustness, providing a unified and reliable learning-based 
solver for poroelasticity across a wide range of material parameters.

\end{abstract}

\ams{65N99, 68T07, 76S05, 74F10}
\keywords{physics-informed neural networks, the fixed-stress method, Biot's model,  iteration algorithm, energy natural gradient.}

\maketitle
\section{Introduction}
Biot's consolidation model \cite{biot1941general} describes the coupled behavior of fluid flow and mechanics in porous media and has been widely used in various fields such as geomechanics \cite{kim2011stability, lee2016pressure}, petroleum engineering \cite{gudala2020numerical}, materials science \cite{zhang2007biot}, CO$_2$ sequestration \cite{anthony2020simulation}, and biomechanics \cite{ju2020parameter}. 
The model consists of a system of partial differential equations (PDEs) that govern the displacement of the solid matrix and the pressure of the fluid within the pores. 
Due to the strong coupling characteristics of fluid flow and solid deformation and the complex physical mechanism, solving Biot's model is a challenging task. 

Traditional numerical methods, such as the finite difference method (FDM) \cite{hosokawa2005simulation, masson2010finite}, finite volume method (FVM) \cite{nordbotten2016stable, sokolova2019multiscale}, and finite element method (FEM) \cite{kadeethum2020finite, he2025parameter}, have been widely used to solve Biot's model. 
Among them, the FEM has established itself as the dominant computational framework due to its versatility in handling complex geometries and boundary conditions.
Recent advancements in FEM-based approaches aim to enhance accuracy and stability, including mixed finite element methods (MFE) \cite{phillips2007coupling, bean2017block}, discontinuous Galerkin methods (DG) \cite{chen2013analysis, he2024analysis}, weak Galerkin methods (WG) \cite{gu2023weak, wang2024full}, and virtual element methods (VEM) \cite{liu2023virtual, botti2025fully}. 
However, standard formulations often suffer from numerical instabilities, such as pressure oscillations and volumetric locking, particularly near the incompressible limit (i.e., when Poisson's ratio approaches 0.5) \cite{phillips2009overcoming, cai2015comparisons, yi2017study}. 
Furthermore, these mesh-based methods incur high computational costs for mesh generation and refinement, especially in high-dimensional problems or complex geometries.
To alleviate the computational burden of strong coupling, various iterative solution strategies have been developed, such as the undrained-split, the fixed-stress split, the drained split, and the fixed-strain split iterative methods \cite{kim2011stability, kimstability, mikelic2013convergence, mikelic2014numerical}. 
Among them, the fixed-stress splitting method has gained significant attention due to its unconditional stability and convergence properties \cite{both2017robust, bause2017space, storvik2019optimization, aronson2024pressure}, effectively decoupling the system into contractive subproblems for flow and mechanics.

With the development of deep learning techniques, Physics-Informed Neural Networks (PINNs) have emerged as a promising approach for solving PDEs \cite{raissi2017physics, raissi2019physics, cuomo2022scientific}. 
By embedding physical laws directly into the loss function of deep neural networks, PINNs offer compelling advantages over traditional numerical methods, such as mesh-free properties, natural handling of inverse problems, and automatic differentiation.
These capabilities allow PINNs to effectively capture complex behaviors and patterns in the solution space, making them suitable for solving a wide range of problems, including fluid dynamics \cite{cai2021physics, zhao2024comprehensive, ali2025machine}, and solid mechanics \cite{hu2024physics, wang2025novel}.
Following the successful application of PINNs to solid mechanics \cite{haghighat2021physics} and multiphase fluid flow \cite{almajid2022prediction}, recent efforts have expanded to fully coupled poromechanics.

Nevertheless, strongly coupled poromechanics remains challenging for PINNs. 
Biot-type systems exhibit pronounced multi-scale behavior with significant scale disparities, an intrinsic saddle-point structure, and tightly coupled field interactions. 
These features lead to severe ill-conditioning in the learning problem, often hindering standard optimization algorithms from achieving high-accuracy solutions \cite{wang2021understanding, rathore2024challenges}.
To address this complex hydromechanical coupling, recent studies \cite{haghighat2022physics,cai2023combination} introduced the fixed-stress splitting sequential training into the PINN framework, showing better stability than fully-coupled models.
More recently, Millevoi et al. \cite{millevoi2024physics} demonstrated that physical partitioning alone is not always sufficient.
They highlighted that standard optimizers (e.g., Adam and L-BFGS) still struggle with severe ill-conditioning and gradient conflicts, often requiring internal observational data for sensor-driven training to accurately capture sharp physical gradients.

To address these challenges, we propose a divide-and-conquer strategy that alleviates stiffness at both the physical modeling level and the optimization stage. 
At the physical level, we incorporate the fixed-stress splitting iterative scheme to decouple the strongly coupled poroelastic system. 
However, despite this physical decoupling, relying solely on conventional gradient-based optimizers often causes the training to stagnate in local minima due to the highly non-convex and ill-conditioned optimization landscape.
Moreover, the squared-residual formulation commonly used in PDE-based loss functions effectively squares the condition number of the underlying operator, further exacerbating ill-conditioning and slowing convergence of 
iterative optimization methods \cite{wang2021understanding, zeng2022competitive}. 
To overcome these optimization bottlenecks, we introduce Energy Natural Gradient Descent (ENGD) at the optimization stage \cite{muller2023achieving}. 
By employing the Gram matrix of the loss functional as a preconditioner, ENGD yields an update direction in function space that closely approximates the Newton direction, thereby mitigating stiffness and flattening steep ravines in the optimization landscape. 
The synergistic combination of physics-based fixed-stress splitting and geometry-aware ENGD optimization enables our framework to surpass the accuracy limitations of traditional PINNs and achieve high-fidelity solutions with substantially fewer training iterations.

The main contributions of this paper are summarized as follows:
\begin{itemize}
    \item We propose a novel framework, \textbf{FS-ENGD-PINN}, which synergizes the fixed-stress splitting iterative method with energy natural gradient descent to solve Biot's consolidation problem. 
    This two-level strategy effectively solves the stiffness problems arising from physical coupling and numerical optimization, ensuring strong stability and fast convergence.
    \item We provide a comprehensive analysis that links the contraction property of the fixed-stress splitting (at the physical level) with the optimization efficiency of the natural energy gradient descent (at the optimization stage).
    \item We address the critical issue of volumetric locking near the incompressible limit ($\nu\to 0.5$) by developing a \textbf{mixed PINN approach} based on a three-field formulation ($p-\boldsymbol{u}-\xi$). 
    By treating the total pressure $\xi$ as an independent network output, we eliminate pressure oscillations and ensure stability even under extreme parameter settings where standard two-field formulations fail.
    \item We validate our method through extensive numerical benchmarks, including 2D and 3D linear problems with mixed boundary conditions, a problem with complex spatiotemporal solutions, the Mandel problem, and the highly heterogeneous layered Terzaghi's problem. 
    The results demonstrate that FS-ENGD-PINN significantly outperforms baseline methods in terms of both runtime efficiency (time-to-accuracy) and final solution quality, while exhibiting strong robustness in resolving highly heterogeneous media with sharp physical interfaces.
\end{itemize}

The remainder of this paper is organized as follows. 
Section \ref{sec:preliminaries} outlines the mathematical formulation of Biot's model, including the locking-free three-field formulation. 
Section \ref{sec:methods} details the architecture of the FS-ENGD-PINN framework and provides the convergence analysis. 
Section \ref{sec:experiments} presents numerical validation against analytical and numerical benchmarks. 
Finally, Section \ref{sec:conclusion} concludes the work and discusses future research directions.

\section{Mathematical Formulations}\label{sec:preliminaries}

This section outlines the governing equations of the quasi-static Biot consolidation model. 
We first introduce the classical two-field formulation involving displacement and pressure. 
Subsequently, to address the volumetric locking problem inherent in the incompressible limit, a three-field mixed formulation is introduced. 
Standard notations for Sobolev spaces and norms are adopted throughout this paper.

\subsection{Two-field Formulation of Biot's Model}

Consider the quasi-static Biot's model on a bounded domain $\Omega\subset\mathbb{R}^d$ ($d = 2$ or $3$) over the time interval $(0, T]$. 
The system couples the fluid flow and solid deformation and is governed by the following partial differential equations (PDEs):
\begin{subequations}\label{eq:2field}
    \begin{align}
        \partial_t\left(c_0p+\alpha\nabla\cdot\boldsymbol{u}\right)-\nabla\cdot (\boldsymbol{\kappa}\nabla p)=f, \quad &\text{in}~ \Omega\times(0,T],\label{eq:2field_p}\\
        -\nabla\cdot(\boldsymbol{\sigma}(\boldsymbol{u})-\alpha p\mathbb{I})=\boldsymbol{g}, \quad &\text{in}~ \Omega\times(0,T].\label{eq:2field_u}
    \end{align}
\end{subequations}
Eq. \eqref{eq:2field_p} represents the mass conservation of the fluid, while Eq. \eqref{eq:2field_u} represents the momentum balance of the solid matrix.

Here, the primary unknowns are the displacement vector of the solid matrix $\boldsymbol{u}$ and the fluid pressure $p$. 
The coefficients are defined as follows: $c_0> 0$ is the specific storage coefficient, $0<\alpha\le 1$ is the Biot-Willis coefficient, and $\boldsymbol{\kappa}$ is the hydraulic conductivity tensor. 
The total stress tensor $\boldsymbol{\sigma}_{total} = \boldsymbol{\sigma}(\boldsymbol{u}) - \alpha p \mathbb{I}$ governs the mechanics, where $\boldsymbol{\sigma}(\boldsymbol{u})$ is the effective stress tensor defined by the linear elasticity law:
\begin{equation}
    \boldsymbol{\sigma}(\boldsymbol{u})=2\mu\boldsymbol{\epsilon}(\boldsymbol{u})+\lambda(\nabla\cdot\boldsymbol{u})\mathbb{I},
\end{equation}
with $\boldsymbol{\epsilon}(\boldsymbol{u})=\frac{1}{2}[\nabla\boldsymbol{u}+(\nabla\boldsymbol{u})^T]$ being the strain tensor and $\mathbb{I}$ the identity tensor. 
The Lam\'{e} parameters $\lambda$ and $\mu$ relate to the Young's modulus $E$ and Poisson's ratio $\nu$ via:
\begin{equation}
    \lambda=\frac{E\nu}{(1+\nu)(1-2\nu)},\quad\mu=\frac{E}{2(1+\nu)}.
\end{equation}

To complete the model \eqref{eq:2field}, the initial conditions are prescribed as:
\begin{equation}
    \boldsymbol{u}(\boldsymbol{x},0)=\boldsymbol{u}_0(\boldsymbol{x}),\quad p(\boldsymbol{x},0)=p_0(\boldsymbol{x}),\quad\text{in}~\Omega\times\{0\}.
\end{equation}
The boundary $\partial\Omega$ is partitioned into Dirichlet and Neumann parts for both fields: $\partial\Omega=\Gamma_{\boldsymbol{u}D}\cup\Gamma_{\boldsymbol{u}N}=\Gamma_{pD}\cup\Gamma_{pN}$, with disjoint sets $\Gamma_{\boldsymbol{u}D}\cap\Gamma_{\boldsymbol{u}N}=\emptyset$ and $\Gamma_{pD}\cap\Gamma_{pN}=\emptyset$. The boundary conditions are given by:
\begin{subequations}
    \begin{align}
        \boldsymbol{u}&=\boldsymbol{u}_D\quad \text{on}~ \Gamma_{\boldsymbol{u}D}\times(0,T], \quad&(\boldsymbol{\sigma}(\boldsymbol{u})-\alpha p\mathbb{I})\boldsymbol{n}=\boldsymbol{\sigma}_N\quad \text{on}~ \Gamma_{\boldsymbol{u}N}\times(0,T],\\
        p&=p_D\quad \text{on}~ \Gamma_{pD}\times(0,T], \quad &-\boldsymbol{\kappa}\nabla p\cdot\boldsymbol{n}=q_N\quad \text{on}~\Gamma_{pN}\times(0,T],
    \end{align}
\end{subequations}
where $\boldsymbol{n}$ is the unit outward normal vector. Note that the sign in the flux boundary condition follows Darcy's law. 
For simplicity, we assume that the data $\boldsymbol{u}_D$, $\boldsymbol{\sigma}_N$, $p_D$, and $q_N$ are sufficiently smooth to ensure the well-posedness of the problem, and the theoretical analysis in this paper only considers homogeneous boundary conditions. 

\subsection{Three-field Formulation for Incompressible Limit}

Standard finite element methods are well known to suffer from volumetric locking and spurious pressure oscillations as the material approaches the incompressible limit, i.e., $\nu \to 0.5$ or equivalently $\lambda \to \infty$. 
Similar difficulties arise for basic Physics-Informed Neural Networks (PINNs), which often fail to deliver accurate solutions under such extreme parameter regimes. 
In particular, when the incompressibility constraint is not adequately enforced and $\nabla \cdot \boldsymbol{u}$ does not vanish, the term $\lambda \nabla \cdot \boldsymbol{u}$ becomes unbounded, resulting in numerical instability and solution blow-up in the momentum equation~\eqref{eq:2field_u}. 
To circumvent these issues, we adopt the three-field mixed formulation proposed in \cite{oyarzua2016locking}. 
Specifically, we introduce an additional variable, the total pressure $\xi$, defined by
\begin{equation}
\xi \coloneqq \alpha p - \lambda \nabla \cdot \boldsymbol{u}.
\end{equation}
By introducing $\xi$ as an independent unknown, the singular term $\lambda \nabla \cdot \boldsymbol{u}$ is eliminated from the momentum equation, 
thereby removing the explicit dependence on the unbounded Lamé parameter. The resulting three-field formulation can be written as
\begin{subequations}\label{eq:3field}
    \begin{align}
        \left(c_0+\frac{\alpha^2}{\lambda}\right)\partial_tp-\frac{\alpha}{\lambda}\partial_t\xi-\nabla\cdot (\boldsymbol{\kappa}\nabla p)&=f,\quad\text{in}~\Omega\times(0,T],\label{eq:3field_p}\\
        -\nabla\cdot(2\mu\boldsymbol{\epsilon}(\boldsymbol{u})-\xi\mathbb{I})&=\boldsymbol{g},\quad\text{in}~\Omega\times(0,T],\label{eq:3field_u}\\
        -\nabla\cdot\boldsymbol{u}+\frac{\alpha}{\lambda}p-\frac{1}{\lambda}\xi&=0,\quad\text{in}~\Omega\times(0,T].\label{eq:3field_xi}
    \end{align}
\end{subequations}
Within this formulation, Eq.~\eqref{eq:3field_xi} serves as a constraint that weakly enforces the constitutive relationship among the volumetric strain, pore pressure, and total pressure. As $\lambda \to \infty$, the coefficients $1/\lambda$ and $\alpha/\lambda$ vanish, and the system consistently 
recovers the incompressible limit $\nabla \cdot \boldsymbol{u} \to 0$ without inducing numerical instability or solution blow-up.
Except for modifying the Neumann boundary condition of $\boldsymbol{u}$ to $(2\mu\boldsymbol{\epsilon}(\boldsymbol{u})-\xi\mathbb{I})\boldsymbol{n}=\boldsymbol{\sigma}_N$, the remaining initial and boundary conditions of $\boldsymbol{u}$ and $p$ remain the same as those in the two-field formulation. 
The initial condition of the variable $\xi$ is given by $\xi_0=\alpha p_0-\lambda\nabla\cdot\boldsymbol{u}$. 
$\xi$ does not require independent boundary conditions as it is purely internal to the domain, defined implicitly by the kinematic fields.

Let $L^2(\Omega)$ denote the space of square-integrable functions on $\Omega$, and $H^k(\Omega)$ denote the Sobolev space of functions with derivatives up to order $k$ in $L^2(\Omega)$. 
The norms are denoted by $\|\cdot\|_{L^2}$ and $\|\cdot\|_{k}$ respectively, and the $L^2$-inner product is denoted by $(\cdot,\cdot)$. 
The solution pair for the two-field model \eqref{eq:2field} is typically sought in the space $(\boldsymbol{u}, p) \in [H^1(\Omega)]^d \times H^1(\Omega)$. 
For the three-field model \eqref{eq:3field}, due to the constraint equation, the solution triplet is sought in the space $(\boldsymbol{u}, p, \xi) \in [H^1(\Omega)]^d \times H^1(\Omega)\times L^2(\Omega)$.
For convenience, we specifically define the following function spaces for displacement, fluid pressure, and total pressure:
\begin{equation}
    \boldsymbol{V}\coloneqq\{\boldsymbol{v}\in[H^1(\Omega)]^d:\boldsymbol{v}=\boldsymbol{0} ~\text{on}~\Gamma_{\boldsymbol{u}D}\},~\boldsymbol{W}\coloneqq\{w\in H^1(\Omega):w=0~\text{on}~\Gamma_{pD}\}, ~\boldsymbol{Z}\coloneqq L^2(\Omega).
\end{equation}

\section{Methodology: The FS-ENGD-PINN Framework}\label{sec:methods}

This section details the methodology of the proposed FS-ENGD-PINN framework. 
To address the challenges of strong physical coupling, we first introduce the Fixed-Stress (FS) splitting strategy, which decomposes the original problem into contractive subproblems. 
To overcome the optimization difficulties in the parameter space, we then describe the Energy Natural Gradient Descent method. 
Integrating these two strategies, we present the unified network architecture and explicitly construct the loss functions for both two-field and three-field formulations. 
Finally, the section concludes with a detailed algorithm summary and a theoretical analysis of the method's convergence.

\subsection{Fixed-stress Splitting Strategy for Decoupling}\label{subsec:FS}
In this subsection, we introduce the fixed-stress iteration strategy in the PINN framework. 
First, determine the stabilization parameter $\beta_{FS}$ based on the dimension of the model and the Lam\'{e} coefficients $\lambda,\mu$. 
Given an initial guess for displacement and pressure $(p^0,\boldsymbol{u}^0)$, then alternately solve the following steps for two-field model:

\textbf{Step 1:} Based on $\boldsymbol{u}^n$ and $p^n$ obtained from the $n$-th iteration, solve the following subproblem to obtain $p^{n+1}$:
\begin{subequations}\label{eq:FS_2field_1}
    \begin{align}
        (c_0+\beta_{FS})\partial_tp-\nabla\cdot\boldsymbol{\kappa}\nabla p&=f-\alpha\nabla\cdot\partial_t\boldsymbol{u}^n+\beta_{FS}\partial_tp^n,~~\text{in}~~\Omega\times(0,T],\label{eq:FS_2field_p}\\
        p(\boldsymbol{x},0)&=p_0(\boldsymbol{x}),~~\text{in}~~\Omega\times\{0\},\label{eq:FS_2field_p_ic}\\
        p&=p_D~\text{on}~\Gamma_{pD}\times(0,T],\label{eq:FS_2field_p_dir}\\
        -\boldsymbol{\kappa}\nabla p\cdot\boldsymbol{n}&=q_{N},~~\text{on}~~\Gamma_{pN}\times(0,T].\label{eq:FS_2field_p_neu}
    \end{align}
\end{subequations}
\textbf{Step 2:} Given $p^{n+1}$ obtained from Step 1, solve the following subproblem to obtain $\boldsymbol{u}^{n+1}$:
\begin{subequations}\label{eq:FS_2field_2}
    \begin{align}
        -\nabla\cdot\boldsymbol{\sigma}(\boldsymbol{u})&=\boldsymbol{g}-\alpha\nabla p^{n+1}~~\text{in}~~\Omega\times(0,T],\label{eq:FS_2field_u}\\
        \boldsymbol{u}(\boldsymbol{x},0)&=\boldsymbol{u}_0(\boldsymbol{x}),~~\text{in}~~\Omega\times\{0\},\label{eq:FS_2field_u_ic}\\
        \boldsymbol{u}&=\boldsymbol{u}_D,~~\text{on}~~\Gamma_{\boldsymbol{u}D}\times(0,T],\label{eq:FS_2field_u_dir}\\
        \boldsymbol{\sigma n}-\alpha p\boldsymbol{n}&=\boldsymbol{\sigma}_N,~~\text{on}~~\Gamma_{\boldsymbol{u}N}\times(0,T].\label{eq:FS_2field_u_neu}
    \end{align}
\end{subequations}
Assume that $(p,\boldsymbol{u})\subset\boldsymbol{W}\times\boldsymbol{V}$ is the unique solution of equation \eqref{eq:2field}, and $(p^n,\boldsymbol{u}^n)\subset\boldsymbol{W}\times\boldsymbol{V}$ is the solution obtained at the $n$-th iteration of the FS scheme \eqref{eq:FS_2field_1}, \eqref{eq:FS_2field_2}. 
Define the errors at the $n$-th iteration as $e_p^n=p^n-p$ and $e_{\boldsymbol{u}}^n=\boldsymbol{u}^n-\boldsymbol{u}$. 
Then, for the fixed-stress splitting iteration for the two-field formulation, we have the following theorem. 

\begin{theorem}[Contraction of Fixed-Stress Splitting {\cite{mikelic2013convergence}}]\label{thm:FS_2field}
Let the stabilization parameter $\beta_{FS} \ge \frac{\alpha^2}{2(\lambda + 2\mu/d)}$. 
Then the fixed-stress iteration mapping
$
\mathcal{S} : (p^n,\boldsymbol{u}^n) \mapsto (p^{n+1},\boldsymbol{u}^{n+1})
$
defines a contraction on the space $\mathcal{H} = \boldsymbol{W} \times \boldsymbol{V}$. 
Moreover, as shown in \cite{cai2023combination}, the pressure error satisfies
\begin{equation}
    \int_{0}^T\!\!\int_{\Omega} (\partial_t e_p^{n+1})^2 \, d\boldsymbol{x}\, dt
    \le L_{FS}
    \int_{0}^T\!\!\int_{\Omega} (\partial_t e_p^{n})^2 \, d\boldsymbol{x}\, dt,
\end{equation}
with a contraction factor
$L_{FS} = \frac{\beta_{FS}}{\beta_{FS} + 2c_0} < 1 .
$ 
\end{theorem}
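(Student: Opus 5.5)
Our plan is the classical energy argument of Mikeli\'c--Wheeler, applied to the error equations. Subtracting \eqref{eq:2field} from the fixed-stress steps \eqref{eq:FS_2field_1}--\eqref{eq:FS_2field_2} (all data cancel, giving homogeneous boundary and initial conditions for $e_p^{n+1},e_{\boldsymbol u}^{n+1}$) yields
\begin{align*}
(c_0+\beta_{FS})\partial_t e_p^{n+1}-\nabla\cdot\boldsymbol\kappa\nabla e_p^{n+1}&=-\alpha\nabla\cdot\partial_t e_{\boldsymbol u}^n+\beta_{FS}\partial_t e_p^n,\\
-\nabla\cdot\boldsymbol\sigma(e_{\boldsymbol u}^{n+1})&=-\alpha\nabla e_p^{n+1}.
\end{align*}
We introduce the volumetric mean stress error $\delta\sigma_v^{n}:=\beta_{FS}e_p^n-\alpha\nabla\cdot e_{\boldsymbol u}^n$ (taking $\beta_{FS}=\alpha^2/(\lambda+2\mu/d)$ in the equality case, or more generally $\beta_{FS}$ at least half this value). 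Then the right-hand side of the flow error equation is exactly $\partial_t\delta\sigma_v^n$. This makes the iteration a map on the single quantity $\partial_t\delta\sigma_v$ (equivalently on $\partial_t e_p$).

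The steps are as follows. First, we differentiate the mechanics error equation in time, test with $\partial_t e_{\boldsymbol u}^{n+1}$, and use $\|\nabla\cdot\boldsymbol v\|^2\le d\|\boldsymbol\epsilon(\boldsymbol v)\|^2$. This gives
\[
(2\mu/d+\lambda)\|\nabla\cdot\partial_te_{\boldsymbol u}^{n+1}\|^2\le \alpha(\partial_t e_p^{n+1},\nabla\cdot\partial_t e_{\boldsymbol u}^{n+1}),
\]
so that $\alpha\|\nabla\cdot\partial_t e_{\boldsymbol u}^{n+1}\|\le \frac{\alpha^2}{\lambda+2\mu/d}\|\partial_t e_p^{n+1}\|$. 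Second, we test the flow error equation with $\partial_t e_p^{n+1}$ and integrate over $(0,T)$. The diffusion term gives $\frac12\|\boldsymbol\kappa^{1/2}\nabla e_p^{n+1}(T)\|^2\ge0$, since the initial error vanishes. The right-hand side we bound by Young's inequality as $\frac{1}{2\beta_{FS}}\|\partial_t\delta\sigma_v^n\|^2+\frac{\beta_{FS}}2\|\partial_te_p^{n+1}\|^2$. Third, we expand $\|\partial_t\delta\sigma_v^n\|^2=\beta_{FS}^2\|\partial_te_p^n\|^2-2\alpha\beta_{FS}(\partial_te_p^n,\nabla\cdot\partial_te_{\boldsymbol u}^n)+\alpha^2\|\nabla\cdot\partial_te_{\boldsymbol u}^n\|^2$. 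Using step one at index $n$, the cross term dominates the last term exactly when $\beta_{FS}\ge \alpha^2/(2(\lambda+2\mu/d))$, so $\|\partial_t\delta\sigma_v^n\|^2\le\beta_{FS}^2\|\partial_te_p^n\|^2$. Combining the three steps gives
\[
(c_0+\tfrac{\beta_{FS}}2)\|\partial_te_p^{n+1}\|^2_{L^2(0,T;L^2)}\le\tfrac{\beta_{FS}}2\|\partial_te_p^{n}\|^2_{L^2(0,T;L^2)},
\]
which is the claim with $L_{FS}=\beta_{FS}/(\beta_{FS}+2c_0)$.

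Finally, the contraction of $\mathcal S$ on $\mathcal H$ follows: the mechanics estimate and the energy identity for $\boldsymbol\kappa^{1/2}\nabla e_p$ control $e_{\boldsymbol u}^{n+1}$ and $e_p^{n+1}$ by $\partial_t e_p^{n}$ via Korn's and Poincar\'e's inequalities, with zero initial errors. Hence the error sequence decays geometrically in a norm equivalent to that of $\mathcal H$, and the Banach fixed-point theorem applies.

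The main obstacle is the third step. The sign of the cross term must be exploited carefully: it requires the step-one inequality in the \emph{inner-product} form, $(\partial_te_p^n,\nabla\cdot\partial_te_{\boldsymbol u}^n)\ge\frac{\lambda+2\mu/d}{\alpha}\|\nabla\cdot\partial_te_{\boldsymbol u}^n\|^2$, rather than just the norm bound. Getting the threshold $\alpha^2/(2(\lambda+2\mu/d))$ exactly hinges on this. The mixed Neumann boundary terms also vanish only because we consider homogeneous data, as the paper assumes.
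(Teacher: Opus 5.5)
Your core argument is correct. It is the standard energy proof behind the works the paper cites; the paper states this theorem without a proof of its own. The ingredients are testing the flow error equation with $\partial_t e_p^{n+1}$, Young's inequality weighted by $\beta_{FS}$, and the \emph{inner-product} form of the mechanics estimate. As you note, that last ingredient is what yields the sharp threshold $\alpha^2/(2(\lambda+2\mu/d))$ and the factor $\beta_{FS}/(\beta_{FS}+2c_0)$.

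Two points need tightening.

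First, your third step at index $n$ uses that $\boldsymbol u^n$ is the mechanical response to $p^n$. This holds only for $n\ge 1$, or for a consistent initial guess. For an arbitrary initial pair the stated pressure inequality actually fails at $n=0$. Take $p^0=p$ and $\boldsymbol u^0=\boldsymbol u+\boldsymbol w$ with $\nabla\cdot\partial_t\boldsymbol w\not\equiv 0$. Then the right-hand side vanishes, while $e_p^1$ solves a parabolic problem with nonzero source $-\alpha\nabla\cdot\partial_t\boldsymbol w$ and zero initial data, so $\partial_t e_p^1\not\equiv 0$. State this restriction explicitly.

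Second, your last paragraph does not show that $\mathcal S$ is a contraction in a norm equivalent to that of $\mathcal H$. What it shows is R-linear convergence of the errors in $\mathcal H$-valued norms, bounded by a constant times $L_{FS}^{n/2}$. The quantity that genuinely contracts, for every $n\ge 0$ and with no consistency assumption on the input, is the one you introduced yourself. Your own steps give $\|\partial_t\delta\sigma_v^{n+1}\|^2\le\beta_{FS}^2\|\partial_t e_p^{n+1}\|^2\le L_{FS}\|\partial_t\delta\sigma_v^{n}\|^2$. Here the first inequality uses consistency of the \emph{output} pair, which always holds, and the second uses only the flow energy estimate.

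This is the Mikeli\'c--Wheeler form of the result, and it is the correct reading of ``contraction on $\mathcal H$''. Concretely, $\mathcal S$ contracts in the seminorm $(p,\boldsymbol u)\mapsto\|\partial_t(\beta_{FS}p-\alpha\nabla\cdot\boldsymbol u)\|_{L^2(0,T;L^2)}$, which is a norm on consistent pairs with fixed initial data. Finally, the appeal to Banach's fixed-point theorem is unnecessary, since existence and uniqueness of the exact solution are assumed.
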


For the three-field formulation, given an initial iterate $(p^0,\boldsymbol{u}^0,\xi^0)$, 
the FS scheme alternates between solving the $p$-subproblem and the coupled $(\boldsymbol{u},\xi)$-subproblem. 
In contrast to the two-field case, the three-field system provides an intrinsic stabilization, 
and no additional parameter $\beta_{FS}$ is required.

\textbf{Step 1:} Based on $\xi^n$ from the $n$-th iteration, solve the following subproblem to obtain $p^{n+1}$:
\begin{subequations}\label{eq:FS_3field_1}
    \begin{align}
        \left(c_0+\frac{\alpha^2}{\lambda}\right)\partial_tp-\nabla\cdot(\boldsymbol{\kappa}\nabla p)&=f+\frac{\alpha}{\lambda}\partial_t\xi^n, ~~\text{in}~~\Omega\times(0,T],\label{eq:FS_3field_p}\\
        p(\boldsymbol{x},0)&=p_0(\boldsymbol{x})~~\text{in}~~\Omega\times\{0\},\\
        p&=p_D~\text{on}~\Gamma_{pD}\times(0,T],\\
        -\boldsymbol{\kappa}\nabla p\cdot\boldsymbol{n}&=q_{N}~~\text{on}~~\Gamma_{pN}\times(0,T].
    \end{align}
\end{subequations}

\textbf{Step 2:} Use $p^{n+1}$ obtained in Step 1 to solve the following subproblem to obtain $\boldsymbol{u}^{n+1}$ and $\xi^{n+1}$:
\begin{subequations}\label{eq:FS_3field_2}
    \begin{align}
        -\nabla\cdot(2\mu\boldsymbol{\epsilon}(\boldsymbol{u})-\xi\mathbb{I})&=\boldsymbol{g}, ~~\text{in}~~\Omega\times(0,T],\\
        -\nabla\cdot\boldsymbol{u}+\frac{\alpha}{\lambda}p^{n+1}-\frac{1}{\lambda}\xi&=0,~~\text{in}~~\Omega\times(0,T],\label{eq:FS_3field_u_const}\\
        \boldsymbol{u}(\boldsymbol{x},0)&=\boldsymbol{u}_0(\boldsymbol{x}),~~\text{in}~~\Omega\times\{0\},\\
        \xi(\boldsymbol{x},0)&=\alpha p_0(\boldsymbol{x})-\lambda\nabla\cdot\boldsymbol{u}_0(\boldsymbol{x}), ~~\text{in}~~\Omega\times\{0\},\\
        \boldsymbol{u}&=\boldsymbol{u}_D~~\text{on}~~\Gamma_{\boldsymbol{u}D}\times(0,T],\\
        (2\mu\boldsymbol{\epsilon}(\boldsymbol{u})-\xi\mathbb{I})\boldsymbol{n}&=\boldsymbol{\sigma}_N, ~~\text{on}~~\Gamma_{\boldsymbol{u}N}\times(0,T].        
    \end{align}
\end{subequations}

\begin{theorem}[Unconditional Stability of Three-field FS Scheme]\label{thm:FS_3field}
    The sequence $\{(p^n,\boldsymbol{u}^n,\xi^n)\}$ generated by the three-field FS iterative scheme \eqref{eq:FS_3field_1}-\eqref{eq:FS_3field_2} converges to the unique solution $(p,\boldsymbol{u},\xi)$ of \eqref{eq:3field} for any positive values of the Lam\'{e} parameters $\lambda$ and $\mu$. Specifically, there exists a constant $L=\frac{\alpha^2/\lambda}{\alpha^2/\lambda+2c_0}<1$ such that
    \begin{equation}
        \int_{0}^T\int_{\Omega}(\partial_te_p^{n+1})^2d\boldsymbol{x}dt\le L\int_0^T\int_\Omega(\partial_te_p^n)^2d\boldsymbol{x}dt.
    \end{equation}
\end{theorem}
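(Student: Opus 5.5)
The plan is the standard energy argument for fixed-stress schemes: derive error equations, then get two estimates. The mechanics step will show that $\partial_t e_\xi^{n+1}$ is controlled by $\partial_t e_p^{n+1}$ with constant $\alpha$, independently of $\lambda$ and $\mu$. The flow step will then turn this into the contraction for $\partial_t e_p$, and convergence of the remaining fields follows.

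\textbf{Error equations.} Set $e_\xi^n=\xi^n-\xi$ and subtract \eqref{eq:3field} from \eqref{eq:FS_3field_1}--\eqref{eq:FS_3field_2}; the data $f,\boldsymbol{g}$ and all boundary and initial data cancel. The pressure error satisfies
\begin{equation*}
\left(c_0+\tfrac{\alpha^2}{\lambda}\right)\partial_t e_p^{n+1}-\nabla\cdot(\boldsymbol{\kappa}\nabla e_p^{n+1})=\tfrac{\alpha}{\lambda}\partial_t e_\xi^{n},
\end{equation*}
with homogeneous boundary conditions and $e_p^{n+1}(\cdot,0)=0$. The pair $(e_{\boldsymbol u}^{n+1},e_\xi^{n+1})$ satisfies
\begin{equation*}
-\nabla\cdot(2\mu\boldsymbol{\epsilon}(e_{\boldsymbol u}^{n+1})-e_\xi^{n+1}\mathbb{I})=\boldsymbol 0,\qquad -\nabla\cdot e_{\boldsymbol u}^{n+1}+\tfrac{\alpha}{\lambda}e_p^{n+1}-\tfrac1\lambda e_\xi^{n+1}=0,
\end{equation*}
with homogeneous boundary data. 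I will assume enough time regularity (as for the continuous problem) to differentiate these equations in $t$.

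\textbf{Mechanics estimate.} Differentiate both mechanics equations in time.
\begin{itemize}
\item Test the momentum equation with $\partial_t e_{\boldsymbol u}^{n+1}\in\boldsymbol V$, integrating by parts; the boundary term vanishes.
\item Test the constraint with $\partial_t e_\xi^{n+1}$.
\end{itemize}
Subtracting the two identities cancels the coupling term $(\partial_t e_\xi^{n+1},\nabla\cdot\partial_t e_{\boldsymbol u}^{n+1})$ and gives
\begin{equation*}
2\mu\|\boldsymbol\epsilon(\partial_t e_{\boldsymbol u}^{n+1})\|_{L^2}^2+\tfrac1\lambda\|\partial_t e_\xi^{n+1}\|_{L^2}^2=\tfrac{\alpha}{\lambda}(\partial_t e_p^{n+1},\partial_t e_\xi^{n+1}).
\end{equation*}
Dropping the nonnegative $\mu$-term and applying Cauchy--Schwarz yields, pointwise in time,
\begin{equation*}
\|\partial_t e_\xi^{n+1}\|_{L^2}\le\alpha\|\partial_t e_p^{n+1}\|_{L^2}.
\end{equation*}
This holds at every iteration index, so in particular $\|\partial_t e_\xi^{n}\|_{L^2}\le\alpha\|\partial_t e_p^{n}\|_{L^2}$. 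For $n=0$ I will either assume the initial iterate is compatible, i.e.\ $(\boldsymbol u^0,\xi^0)$ is produced from $p^0$ by Step 2, or simply start the contraction count after one sweep.

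\textbf{Flow estimate and contraction.} Test the pressure error equation with $\partial_t e_p^{n+1}$ and integrate over $(0,T)$. The diffusion term becomes $\tfrac12\|\boldsymbol\kappa^{1/2}\nabla e_p^{n+1}(T)\|_{L^2}^2\ge0$, since the error vanishes at $t=0$, and can be dropped. Writing $\|\cdot\|$ for the $L^2(0,T;L^2(\Omega))$ norm, the previous bound together with Young's inequality gives
\begin{equation*}
\left(c_0+\tfrac{\alpha^2}{\lambda}\right)\|\partial_t e_p^{n+1}\|^2\le\tfrac{\alpha^2}{\lambda}\|\partial_t e_p^{n}\|\,\|\partial_t e_p^{n+1}\|\le\tfrac{\alpha^2}{2\lambda}\left(\|\partial_t e_p^{n}\|^2+\|\partial_t e_p^{n+1}\|^2\right).
\end{equation*}
Rearranging gives $(c_0+\tfrac{\alpha^2}{2\lambda})\|\partial_t e_p^{n+1}\|^2\le\tfrac{\alpha^2}{2\lambda}\|\partial_t e_p^n\|^2$. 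Multiplying by $2$ produces exactly $L=\frac{\alpha^2/\lambda}{\alpha^2/\lambda+2c_0}$, which is strictly less than $1$ because $c_0>0$, for every $\lambda,\mu>0$.

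\textbf{Convergence of all fields.} The contraction gives $\partial_t e_p^n\to0$ in $L^2(0,T;L^2)$, hence $e_p^n\to0$ since the errors vanish at $t=0$. The flow energy identity, with the previous iterate's $\xi$-term bounded as above, gives $\nabla e_p^n\to0$. The mechanics estimate gives $\partial_t e_\xi^n\to0$ and $\boldsymbol\epsilon(\partial_t e_{\boldsymbol u}^n)\to0$; by Korn's inequality and the zero initial errors, $e_{\boldsymbol u}^n,e_\xi^n\to0$.

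The main obstacle is keeping every step $\lambda$-robust. The key identity above avoids any inf-sup constant because the $1/\lambda$ weights cancel exactly. The only delicate points are justifying the time differentiation and treating the first iterate; I expect no real difficulty there.
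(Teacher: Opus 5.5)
Your proof is correct, but it follows a different route from the paper's.

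\textbf{The paper's route.} The paper argues by reduction. For iterates produced by Step~2 one has $\frac{\alpha}{\lambda}\xi^n=\frac{\alpha^2}{\lambda}p^n-\alpha\nabla\cdot\boldsymbol{u}^n$, and the constraint lets one eliminate $\xi$ from the mechanics step. So the three-field scheme is the two-field fixed-stress scheme with $\beta_{FS}=\alpha^2/\lambda$. The claim then follows from Theorem~\ref{thm:FS_2field} with $L_{FS}=\beta_{FS}/(\beta_{FS}+2c_0)$, using only $\alpha^2/\lambda\ge\alpha^2/(2(\lambda+2\mu/d))$.

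\textbf{Your route.} You redo the energy argument directly in the $(p,\boldsymbol{u},\xi)$ variables. Your identity $2\mu\|\boldsymbol{\epsilon}(\partial_t e_{\boldsymbol u}^{n+1})\|^2+\lambda^{-1}\|\partial_t e_\xi^{n+1}\|^2=\frac{\alpha}{\lambda}(\partial_t e_p^{n+1},\partial_t e_\xi^{n+1})$ is the two-field mechanics identity rewritten through $e_\xi=\alpha e_p-\lambda\nabla\cdot e_{\boldsymbol u}$. Your bound $\|\partial_t e_\xi\|\le\alpha\|\partial_t e_p\|$ is exactly the estimate $\|\beta_{FS}\partial_t e_p-\alpha\nabla\cdot\partial_t e_{\boldsymbol u}\|\le\beta_{FS}\|\partial_t e_p\|$ that drives the standard two-field proof. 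Your flow step and the Young's inequality producing $L$ are then those of that standard argument.

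\textbf{What each approach buys.} Your version gains three things:
\begin{itemize}
\item It is self-contained: it needs neither the cited two-field theorem nor the scheme equivalence.
\item It explains transparently why the result holds for all $\lambda,\mu>0$. The bound survives after you discard the $\mu$-term, so the $2\mu/d$ contribution used in the threshold of Theorem~\ref{thm:FS_2field} is never needed.
\item It makes explicit two points the paper's proof passes over. First, Step~1 depends on the previous iterate only through $\xi^n$, so the inequality at $n=0$ genuinely requires a compatible initial iterate; with $p^0=p$ and $\partial_t\xi^0\neq\partial_t\xi$ the left side is positive while the right side vanishes. Second, the convergence of $\boldsymbol u^n$ and $\xi^n$ claimed in the statement must be deduced from that of $p^n$, which you do.
\end{itemize}
The paper's route gains brevity and reuses an established theorem.
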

\begin{proof}
    The FS scheme for the three-field formulation can be viewed as a special case of the two-field FS scheme with the stabilization parameter $\beta_{FS}=\frac{\alpha^2}{\lambda}$. 
    More specifically, the $p$-subproblem \eqref{eq:FS_3field_1} is equivalent to \eqref{eq:FS_2field_1} with this choice of $\beta_{FS}$, and the $(\boldsymbol{u},\xi)$-subproblem \eqref{eq:FS_3field_2} corresponds to \eqref{eq:FS_2field_2} when substituting the constraint \eqref{eq:FS_3field_u_const}.
    According to Theorem \ref{thm:FS_2field}, for any $\beta_{FS}\ge\frac{\alpha^2}{2(\frac{2\mu}{d}+\lambda)}$, the FS scheme converges. Since $\frac{\alpha^2}{\lambda}\ge\frac{\alpha^2}{2(\frac{2\mu}{d}+\lambda)}$ holds for all positive $\lambda$ and $\mu$, the convergence of the three-field FS scheme is guaranteed unconditionally.
\end{proof}

\subsection{Energy Natural Gradient Descent Optimization under PINN Framework}\label{subsec:ENGD}
Consider partial differential equations (PDEs) defined as follows:
\begin{subequations}
    \begin{align}
        \mathcal{L}v&=f,\quad \text{in}~\Omega\times (0,T],\\
        \mathcal{B}v&=g,\quad \text{on}~\partial\Omega\times (0,T],\\
        v(\boldsymbol{x},0)&=v_0(\boldsymbol{x}),\quad \text{in}~\Omega\times\{0\},
    \end{align}
\end{subequations}
where $\mathcal{L}$ is a differential operator which may be nonlinear, $\mathcal{B}$ is the boundary operator, $f$ is the source term, $g$ is the boundary data, and $v_0$ is the initial condition. 
In the PINN framework, we approximate the solution $v(\boldsymbol{x},t)$ using a deep neural network $v(\boldsymbol{x},t;\boldsymbol{\theta})$ with parameters $\boldsymbol{\theta}$. 
Specifically, we employ a fully connected feed-forward neural network (also known as a multi-layer perceptron (MLP)) with depth  $L$. 
Let $\mathbf{h}^0 = (\boldsymbol{x}, t)^T \in \mathbb{R}^{d+1}$ denote the input vector that concatenates the spatial and temporal coordinates. 
The forward propagation process transforms the input through $L-1$ hidden layers and one output layer, which is recursively defined as follows:
\begin{subequations}\label{eq:NN_structure}
    \begin{align}
        \mathbf{h}^l &= \sigma\left(\mathbf{W}^l \mathbf{h}^{l-1} + \mathbf{b}^l\right), \quad \text{for} \quad l=1, \dots, L-1, \\
        v(\boldsymbol{x},t;\boldsymbol{\theta}) &= \mathbf{W}^L \mathbf{h}^{L-1} + \mathbf{b}^L,
    \end{align}
\end{subequations}
where $\mathbf{W}^l \in \mathbb{R}^{N_l \times N_{l-1}}$ and $\mathbf{b}^l \in \mathbb{R}^{N_l}$ represent the weight matrix and bias vector of the $l$-th layer, respectively, with $N_l$ denoting the number of neurons in that layer. 
The set of all trainable parameters is denoted as $\boldsymbol{\theta} = \{ \mathbf{W}^l, \mathbf{b}^l \}_{l=1}^L$. 
The function $\sigma(\cdot)$ is a smooth, component-wise non-linear activation function.
In the context of PINNs, the hyperbolic tangent function (tanh) is often selected as the activation function due to its $C^\infty$smoothness property, ensuring the existence of non-zero higher-order derivatives required for computing the residuals of partial differential equations.

The loss function is constructed to enforce the PDEs, boundary conditions, and initial conditions:
\begin{align}
    \mathcal{J}(\boldsymbol{\theta}) &= w_f\mathcal{J}^{\text{PDE}}(\boldsymbol{\theta}) + w_b\mathcal{J}^{\text{BC}}(\boldsymbol{\theta}) + w_0\mathcal{J}^{\text{IC}}(\boldsymbol{\theta}),\notag\\
        &= \frac{w_f}{2N_f}\sum_{i=1}^{N_f}|\mathcal{L}v(\boldsymbol{x}_f^i,t_f^i;\boldsymbol{\theta}) - f(\boldsymbol{x}_f^i,t_f^i)|^2 + \frac{w_b}{2N_b}\sum_{i=1}^{N_b}|\mathcal{B}v(\boldsymbol{x}_b^i,t_b^i;\boldsymbol{\theta}) - g(\boldsymbol{x}_b^i,t_b^i)|^2 \notag\\
        &\quad + \frac{w_0}{2N_0}\sum_{i=1}^{N_0}|v(\boldsymbol{x}_0^i,0;\boldsymbol{\theta}) - v_0(\boldsymbol{x}_0^i)|^2,
\end{align}
where $N_f$, $N_b$, and $N_0$ are the numbers of collocation points for the PDE, boundary conditions, and initial conditions, respectively. 
The weights $w_f$, $w_b$, and $w_0$ balance the contributions of each term in the loss function.

To introduce the Energy Natural Gradient Descent (ENGD), we first interpret the neural network training from a geometric perspective. 
Let $X$ be the Hilbert space containing the solution. The neural network defines a parametrization map $\Phi: \mathbb{R}^P \to X$, mapping parameters $\boldsymbol{\theta}$ to functions $v_{\boldsymbol{\theta}} \coloneqq v(\cdot, \cdot; \boldsymbol{\theta})$. 
The image of this map constitutes a sub-manifold within the function space $X$, denoted as the model manifold:
\begin{equation*}
    \mathcal{M}_{\boldsymbol{\Theta}} \coloneqq \{ v_{\boldsymbol{\theta}} \in X \mid \boldsymbol{\theta} \in \mathbb{R}^P \}.
\end{equation*}
Then the generalized tangent space of the model manifold is denoted as:
\begin{equation}
    T_{\boldsymbol{\theta}}\mathcal{M}_{\boldsymbol{\Theta}} = \text{span}\left\{\frac{\partial v_{\boldsymbol{\theta}}}{\partial\theta_1},\dots,\frac{\partial v_{\boldsymbol{\theta}}}{\partial \theta_P}\right\}.
\end{equation}

We aim to minimize the loss function, which represents an energy functional $E: X \to \mathbb{R}$, i.e. $\mathcal{J}(\boldsymbol{\theta}) = E(v_{\boldsymbol{\theta}})$. 
For linear PDEs, this energy is typically quadratic. The second-order variation (Hessian) of the energy functional, denoted as $D^2E(v)$, defines a symmetric bilinear form on the function space. 
We denote this bilinear form as $a(\cdot, \cdot)$:
\begin{equation}
    a(u, w) \coloneqq D^2E(v_{\boldsymbol{\theta}})(u, w).
\end{equation}
Based on this geometry, the Energy Natural Gradient Descent is defined as follows.
\begin{definition}[Energy Natural Gradient Descent \cite{muller2023achieving}]
    Let $\nabla\mathcal{J}(\boldsymbol{\theta})$ be the standard Euclidean gradient. The energy natural gradient update direction is defined as:
    \begin{equation}\label{eq:E-NG}
        \nabla^E\mathcal{J}(\boldsymbol{\theta}) \coloneqq G^{+}(\boldsymbol{\theta})\nabla\mathcal{J}(\boldsymbol{\theta}),
    \end{equation}
    where $G^{+}(\boldsymbol{\theta})$ is the Moore-Penrose pseudo-inverse of the Energy Gram matrix $G(\boldsymbol{\theta}) \in \mathbb{R}^{P \times P}$. The entries of the Gram matrix correspond to the energy inner product of the tangent vectors:
    \begin{equation}
        G_{ij}(\boldsymbol{\theta}) \coloneqq a\left( \frac{\partial v_{\boldsymbol{\theta}}}{\partial \theta_i}, \frac{\partial v_{\boldsymbol{\theta}}}{\partial \theta_j} \right).
    \end{equation}
\end{definition}
\begin{remark}
    In fact, from the form of the natural energy gradient in \eqref{eq:E-NG}, ENGD can be interpreted as a preconditioned gradient descent method, where the preconditioner is given by the pseudo-inverse of the Energy Gram matrix.
\end{remark}

The main motivation for using ENGD lies in its ability to approximate the Newton direction in the function space, thereby accelerating convergence. 
This property is formally stated in the following theorem.

\begin{proposition}[Energy Natural Gradient Descent in Function Space \cite{muller2023achieving}]
    Let $\Pi_{T_{\boldsymbol{\theta}}\mathcal{M}_{\boldsymbol{\Theta}}}^{a}$ denote the orthogonal projection onto the tangent space $T_{\boldsymbol{\theta}}\mathcal{M}_{\boldsymbol{\Theta}}$ with respect to the energy inner product $a(\cdot, \cdot)$.
    \begin{enumerate}
        \item If $D^2E$ is coercive everywhere, then the direction of the energy natural gradient update corresponds to the projection of the Newton direction in the function space:
        \begin{equation}
            DP_{\boldsymbol{\theta}} \cdot \nabla^E\mathcal{J}(\boldsymbol{\theta}) = \Pi_{T_{\boldsymbol{\theta}}\mathcal{M}_{\boldsymbol{\Theta}}}^{D^2E(v_{\boldsymbol{\theta}})} \left( (D^2E(v_{\boldsymbol{\theta}}))^{-1} DE(v_{\boldsymbol{\theta}}) \right),
        \end{equation}
        where $DP_{\boldsymbol{\theta}}$ is the differential of the parametrization map $\boldsymbol{\theta} \mapsto v_{\boldsymbol{\theta}}$.
        \item Furthermore, if the energy functional $E$ is quadratic (i.e., $D^2E = a$) and admits a unique minimizer $v^*$, then the update direction corresponds to the projection of the true error $v_{\boldsymbol{\theta}} - v^*$ onto the tangent space:
        \begin{equation}
            DP_{\boldsymbol{\theta}} \cdot \nabla^E\mathcal{J}(\boldsymbol{\theta}) = \Pi_{T_{\boldsymbol{\theta}}\mathcal{M}_{\boldsymbol{\Theta}}}^{a} (v_{\boldsymbol{\theta}} - v^*).
        \end{equation}
    \end{enumerate}
\end{proposition}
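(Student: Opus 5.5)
The proof is a short linear-algebra computation. We identify the Euclidean gradient $\nabla\mathcal{J}(\boldsymbol{\theta})$ with the pullback of $DE(v_{\boldsymbol{\theta}})$ through the parametrization, and then recognise the pseudo-inverse $G^+(\boldsymbol{\theta})$ as the matrix that solves the Galerkin (normal) equations for the $a$-orthogonal projection onto $T_{\boldsymbol{\theta}}\mathcal{M}_{\boldsymbol{\Theta}}$. Write $\phi_i \coloneqq \partial v_{\boldsymbol{\theta}}/\partial\theta_i$, so that $DP_{\boldsymbol{\theta}}\eta = \sum_i \eta_i\phi_i$.

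\textbf{Step 1 (chain rule).} Since $\mathcal{J}(\boldsymbol{\theta}) = E(v_{\boldsymbol{\theta}})$, the chain rule gives $\partial_{\theta_i}\mathcal{J}(\boldsymbol{\theta}) = DE(v_{\boldsymbol{\theta}})[\phi_i]$. Because $D^2E(v_{\boldsymbol{\theta}})$ is coercive and bounded, it defines an inner product $a(\cdot,\cdot)$ on $X$. By Riesz representation there is a unique $w \coloneqq (D^2E(v_{\boldsymbol{\theta}}))^{-1}DE(v_{\boldsymbol{\theta}}) \in X$ with $a(w,\phi) = DE(v_{\boldsymbol{\theta}})[\phi]$ for all $\phi\in X$. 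Hence $(\nabla\mathcal{J})_i = a(w,\phi_i)$.

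\textbf{Step 2 (characterise the projection).} The $a$-orthogonal projection of $w$ onto the finite-dimensional space $T_{\boldsymbol{\theta}}\mathcal{M}_{\boldsymbol{\Theta}} = \mathrm{span}\{\phi_i\}$ is $\sum_j \eta_j\phi_j$, where $\eta$ is any solution of the normal equations
$$\sum_j a(\phi_i,\phi_j)\eta_j = a(w,\phi_i), \qquad \text{i.e.} \quad G\eta = \nabla\mathcal{J}.$$
This linear system is always solvable because its right-hand side lies in $\mathrm{range}(G)$. To see this, note that $\ker G = \{\eta : \sum_j\eta_j\phi_j = 0\}$, since $\eta^TG\eta = a(\sum_j\eta_j\phi_j,\sum_j\eta_j\phi_j)$ and $a$ is positive definite. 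The vector $\nabla\mathcal{J}$ annihilates this kernel, so by symmetry of $G$ it lies in $(\ker G)^\perp = \mathrm{range}(G)$.

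\textbf{Step 3 (the pseudo-inverse solves the system).} The vector $\eta^* \coloneqq G^+\nabla\mathcal{J}$ satisfies $G\eta^* = GG^+\nabla\mathcal{J} = \nabla\mathcal{J}$, because $GG^+$ is the orthogonal projector onto $\mathrm{range}(G)$. Any other solution differs from $\eta^*$ by an element of $\ker G$, which $DP_{\boldsymbol{\theta}}$ sends to zero. Therefore
$$DP_{\boldsymbol{\theta}}\,\nabla^E\mathcal{J}(\boldsymbol{\theta}) = \sum_j\eta^*_j\phi_j = \Pi^{a}_{T_{\boldsymbol{\theta}}\mathcal{M}_{\boldsymbol{\Theta}}}w,$$
which is part 1.

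\textbf{Step 4 (quadratic case).} For part 2, if $E$ is quadratic with unique minimizer $v^*$, then $DE(v^*) = 0$ and $D^2E \equiv a$. Hence
$$DE(v_{\boldsymbol{\theta}})[\phi] = DE(v_{\boldsymbol{\theta}})[\phi] - DE(v^*)[\phi] = a(v_{\boldsymbol{\theta}} - v^*,\phi),$$
so $w = v_{\boldsymbol{\theta}} - v^*$, and substituting into part 1 gives part 2.

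\textbf{Main obstacle.} The difficulty is the rank-deficient case: Steps 2 and 3 must show that the pseudo-inverse gives the projection even when the $\phi_i$ are linearly dependent, which is generic for overparametrized networks. The argument above rests on $\nabla\mathcal{J}\in\mathrm{range}(G)$, which follows from positive definiteness of $a$. A secondary point, to be stated as a standing assumption, is that $D^2E$ must be bounded as well as coercive on $X$, so that Riesz representation applies in Step 1.
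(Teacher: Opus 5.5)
Your proof is correct. The paper gives no proof of its own; it states the proposition and cites Müller and Zeinhofer. Your argument is the standard one behind that result: $\nabla\mathcal{J}(\boldsymbol{\theta})$ is the vector of $a$-inner products of the Newton direction $(D^2E(v_{\boldsymbol{\theta}}))^{-1}DE(v_{\boldsymbol{\theta}})$ with the tangent vectors, so $G^{+}\nabla\mathcal{J}$ solves the Galerkin normal equations for the $a$-orthogonal projection, and the quadratic case follows from $DE(v_{\boldsymbol{\theta}})=a(v_{\boldsymbol{\theta}}-v^*,\cdot)$; your check that $\nabla\mathcal{J}\in(\ker G)^{\perp}=\mathrm{range}(G)$ correctly handles the rank-deficient Gram matrix, which is the only point needing care.
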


\subsection{Network Architecture and Loss Functions of FS-ENGD-PINN}\label{subsec:architecture_loss}
In this subsection, we present the unified network architecture and explicitly construct the loss functions for both the two-field and three-field formulations within the FS-ENGD-PINN framework. 
For clarity, we denote the neural network approximations of displacement, fluid pressure, and total pressure as $\hat{p}(\boldsymbol{x},t;\boldsymbol{\theta}_p)$, $\hat{\boldsymbol{u}}(\boldsymbol{x},t;\boldsymbol{\theta}_{\boldsymbol{u}})$ for two-field model, and $\hat{p}(\boldsymbol{x},t;\boldsymbol{\theta}_p)$, $(\hat{\boldsymbol{u}},\hat{\xi})(\boldsymbol{x},t;\boldsymbol{\theta}_{\boldsymbol{u},\xi})$ for three-field model, respectively. Here, $\boldsymbol{\theta}_p$, $\boldsymbol{\theta}_{\boldsymbol{u}}$ (or $\boldsymbol{\theta}_{\boldsymbol{u},\xi}$) represent the trainable parameters of the corresponding networks. 
In fact, these networks can share the same architecture as defined in \eqref{eq:NN_structure}, differing only in their parameters:
\begin{equation}
    \boldsymbol{\theta}_p = \{ \mathbf{W}^l_p, \mathbf{b}^l_p \}_{l=1}^L, \quad \boldsymbol{\theta}_{\boldsymbol{u}} = \{ \mathbf{W}^l_{\boldsymbol{u}}, \mathbf{b}^l_{\boldsymbol{u}} \}_{l=1}^L, \quad \boldsymbol{\theta}_{\boldsymbol{u},\xi} = \{ \mathbf{W}^l_{\boldsymbol{u},\xi}, \mathbf{b}^l_{\boldsymbol{u},\xi} \}_{l=1}^L.
\end{equation}
Follow \eqref{eq:NN_structure}, the outputs of these networks are given by:
\begin{align}
    \hat{p}(\boldsymbol{x},t;\boldsymbol{\theta}_p) &= \mathbf{W}^L_p \mathbf{h}^{L-1}_p + \mathbf{b}^L_p,\\
    \hat{\boldsymbol{u}}(\boldsymbol{x},t;\boldsymbol{\theta}_{\boldsymbol{u}}) &= \mathbf{W}^L_{\boldsymbol{u}} \mathbf{h}^{L-1}_{\boldsymbol{u}} + \mathbf{b}^L_{\boldsymbol{u}},\\
    (\hat{\boldsymbol{u}},\hat{\xi})(\boldsymbol{x},t;\boldsymbol{\theta}_{\boldsymbol{u},\xi}) &= \mathbf{W}^L_{\boldsymbol{u},\xi} \mathbf{h}^{L-1}_{\boldsymbol{u},\xi} + \mathbf{b}^L_{\boldsymbol{u},\xi},
\end{align}
where $\mathbf{h}^{L-1}_p$, $\mathbf{h}^{L-1}_{\boldsymbol{u}}$, and $\mathbf{h}^{L-1}_{\boldsymbol{u},\xi}$ are the outputs of the last hidden layer for the respective networks.
Since our Biot model involves mixed boundary conditions, we decompose the boundary loss term into Dirichlet and Neumann components for clarity.

\textbf{Two-field FS-ENGD-PINN loss:} At the $n$-th iteration, we first construct the loss function for the $p$-subproblem \eqref{eq:FS_2field_1} as follows:
\begin{align}
    \mathcal{J}_p(\boldsymbol{\theta}_p) &= w_f\mathcal{J}_p^{\text{PDE}}(\boldsymbol{\theta}_{\boldsymbol{u}}^n,\boldsymbol{\theta}_p^n;\boldsymbol{\theta}_p) + w_{bD}\mathcal{J}_p^{\text{Dir}}(\boldsymbol{\theta}_p) + w_{bN}\mathcal{J}_p^{\text{Neu}}(\boldsymbol{\theta}_p) + w_0\mathcal{J}_p^{\text{IC}}(\boldsymbol{\theta}_p),\notag\\
        &= \frac{w_f}{2N_f}\sum_{i=1}^{N_f}\left|\left(c_0+\beta_{FS}\right)\partial_t\hat{p}(\boldsymbol{x}_f^i,t_f^i;\boldsymbol{\theta}_p) - \nabla\cdot\boldsymbol{\kappa}\nabla \hat{p}(\boldsymbol{x}_f^i,t_f^i;\boldsymbol{\theta}_p) - f(\boldsymbol{x}_f^i,t_f^i) \right.\notag\\
        &\quad\left. + \alpha\nabla\cdot\partial_t\hat{\boldsymbol{u}}^n(\boldsymbol{x}_f^i,t_f^i;\boldsymbol{\theta}_{\boldsymbol{u}}^n) - \beta_{FS}\partial_t\hat{p}^n(\boldsymbol{x}_f^i,t_f^i;\boldsymbol{\theta}_p^n)\right|^2 \notag\\
        &\quad + \frac{w_{bD}}{2N_{bD}}\sum_{i=1}^{N_{bD}}\left| \hat{p}(\boldsymbol{x}_{bD}^i,t_{bD}^i;\boldsymbol{\theta}_p) - p_D(\boldsymbol{x}_{bD}^i,t_{bD}^i) \right|^2 \notag\\
        &\quad + \frac{w_{bN}}{2N_{bN}}\sum_{i=1}^{N_{bN}}\left| -\boldsymbol{\kappa}\nabla \hat{p}(\boldsymbol{x}_{bN}^i,t_{bN}^i;\boldsymbol{\theta}_p)\cdot\boldsymbol{n}(\boldsymbol{x}_{bN}^i) - q_N(\boldsymbol{x}_{bN}^i,t_{bN}^i) \right|^2 \notag\\
        &\quad + \frac{w_0}{2N_0}\sum_{i=1}^{N_0}\left| \hat{p}(\boldsymbol{x}_0^i,0;\boldsymbol{\theta}_p) - p_0(\boldsymbol{x}_0^i) \right|^2,
\end{align}
where $\hat{\boldsymbol{u}}^n(\boldsymbol{x},t)$ and $\hat{p}^n(\boldsymbol{x},t)$ are the approximations obtained from the previous iteration. 
Then we use the ENGD optimizer to minimize $\mathcal{J}_p^n(\boldsymbol{\theta}_p)$ and obtain the updated pressure approximation $\hat{p}^{n+1}(\boldsymbol{x},t)$:
\begin{equation}
    \boldsymbol{\theta}_p^{n+1}= \boldsymbol{\theta}_p^n - \eta_p \nabla^E\mathcal{J}_p(\boldsymbol{\theta}_p^n)= \boldsymbol{\theta}_p^n - \eta_p G_p^{+}(\boldsymbol{\theta}_p^n)\nabla\mathcal{J}_p(\boldsymbol{\theta}_p^n),
\end{equation}
where $G_p(\boldsymbol{\theta}_p^n)_{ij}$ can be computed as:
\begin{align}
    (G_p(\boldsymbol{\theta}_p^n))_{ij}&=\frac{w_f}{N_f}\sum_{k=1}^{N_f}\left(\left(c_0+\beta_{FS}\right)\partial_t\left(\frac{\partial \hat{p}(\boldsymbol{x}_f^k,t_f^k;\boldsymbol{\theta}_p^n)}{\partial \theta_{p,i}}\right)+\boldsymbol{\kappa}\nabla\left(\frac{\partial \hat{p}(\boldsymbol{x}_f^k,t_f^k;\boldsymbol{\theta}_p^n)}{\partial \theta_{p,i}}\right)\right)\cdot\notag\\
    &\quad\left(\left(c_0+\beta_{FS}\right)\partial_t\left(\frac{\partial \hat{p}(\boldsymbol{x}_f^k,t_f^k;\boldsymbol{\theta}_p^n)}{\partial \theta_{p,j}}\right)+\boldsymbol{\kappa}\nabla\left(\frac{\partial \hat{p}(\boldsymbol{x}_f^k,t_f^k;\boldsymbol{\theta}_p^n)}{\partial \theta_{p,j}}\right)\right) \notag\\
    &\quad + \frac{w_{bD}}{N_{bD}}\sum_{k=1}^{N_{bD}}\left(\frac{\partial \hat{p}(\boldsymbol{x}_{bD}^k,t_{bD}^k;\boldsymbol{\theta}_p^n)}{\partial \theta_{p,i}}\right)\left(\frac{\partial \hat{p}(\boldsymbol{x}_{bD}^k,t_{bD}^k;\boldsymbol{\theta}_p^n)}{\partial \theta_{p,j}}\right) \notag\\
    &\quad + \frac{w_{bN}}{N_{bN}}\sum_{k=1}^{N_{bN}}\left(-\boldsymbol{\kappa}\nabla\frac{\partial \hat{p}(\boldsymbol{x}_{bN}^k,t_{bN}^k;\boldsymbol{\theta}_p^n)}{\partial \theta_{p,i}}\cdot\boldsymbol{n}\right)\left(-\boldsymbol{\kappa}\nabla\frac{\partial \hat{p}(\boldsymbol{x}_{bN}^k,t_{bN}^k;\boldsymbol{\theta}_p^n)}{\partial \theta_{p,j}}\cdot\boldsymbol{n}\right) \notag\\
    &\quad + \frac{w_0}{N_0}\sum_{k=1}^{N_0}\left(\frac{\partial \hat{p}(\boldsymbol{x}_0^k,0;\boldsymbol{\theta}_p^n)}{\partial \theta_{p,i}}\right)\left(\frac{\partial \hat{p}(\boldsymbol{x}_0^k,0;\boldsymbol{\theta}_p^n)}{\partial \theta_{p,j}}\right).
\end{align}

Next, we construct the loss function for the $\boldsymbol{u}$-subproblem \eqref{eq:FS_2field_2}:
\begin{align}
    \mathcal{J}_{\boldsymbol{u}}(\boldsymbol{\theta}_{\boldsymbol{u}}) &= w_f\mathcal{J}_{\boldsymbol{u}}^{\text{PDE}}(\boldsymbol{\theta}_{p}^{n+1};\boldsymbol{\theta}_{\boldsymbol{u}}) + w_{bD}\mathcal{J}_{\boldsymbol{u}}^{\text{Dir}}(\boldsymbol{\theta}_{\boldsymbol{u}}) + w_{bN}\mathcal{J}_{\boldsymbol{u}}^{\text{Neu}}(\boldsymbol{\theta}_{p}^{n+1};\boldsymbol{\theta}_{\boldsymbol{u}}) + w_0\mathcal{J}_{\boldsymbol{u}}^{\text{IC}}(\boldsymbol{\theta}_{\boldsymbol{u}}),\notag\\
        &= \frac{w_f}{2N_f}\sum_{i=1}^{N_f}\left| -\nabla\cdot\boldsymbol{\sigma}(\hat{\boldsymbol{u}}(\boldsymbol{x}_f^i,t_f^i;\boldsymbol{\theta}_{\boldsymbol{u}})) - \boldsymbol{g}(\boldsymbol{x}_f^i,t_f^i) + \alpha\nabla \hat{p}^{n+1}(\boldsymbol{x}_f^i,t_f^i;\boldsymbol{\theta}_{p}^{n+1}) \right|^2 \notag\\
        &\quad + \frac{w_{bD}}{2N_{bD}}\sum_{i=1}^{N_{bD}}\left| \hat{\boldsymbol{u}}(\boldsymbol{x}_{bD}^i,t_{bD}^i;\boldsymbol{\theta}_{\boldsymbol{u}}) - \boldsymbol{u}_D(\boldsymbol{x}_{bD}^i,t_{bD}^i) \right|^2 \notag\\
        &\quad + \frac{w_{bN}}{2N_{bN}}\sum_{i=1}^{N_{bN}}\left| (\boldsymbol{\sigma}(\hat{\boldsymbol{u}}(\boldsymbol{x}_{bN}^i,t_{bN}^i;\boldsymbol{\theta}_{\boldsymbol{u}})) - \alpha \hat{p}^{n+1}(\boldsymbol{x}_{bN}^i,t_{bN}^i;\boldsymbol{\theta}_{p}^{n+1})\mathbb{I})\cdot\boldsymbol{n}(\boldsymbol{x}_{bN}^i)\right.\notag\\
        &\quad\left. - \boldsymbol{\sigma}_N(\boldsymbol{x}_{bN}^i,t_{bN}^i) \right|^2 + \frac{w_0}{2N_0}\sum_{i=1}^{N_0}\left| \hat{\boldsymbol{u}}(\boldsymbol{x}_0^i,0;\boldsymbol{\theta}_{\boldsymbol{u}}) - \boldsymbol{u}_0(\boldsymbol{x}_0^i) \right|^2.
\end{align}
Similarly, we minimize $\mathcal{J}_{\boldsymbol{u}}(\boldsymbol{\theta}_{\boldsymbol{u}})$ using the ENGD optimizer to obtain the updated displacement approximation $\hat{\boldsymbol{u}}^{n+1}(\boldsymbol{x},t)$:
\begin{equation}
    \boldsymbol{\theta}_{\boldsymbol{u}}^{n+1} = \boldsymbol{\theta}_{\boldsymbol{u}}^n - \eta_{\boldsymbol{u}} \nabla^E\mathcal{J}_{\boldsymbol{u}}(\boldsymbol{\theta}_{\boldsymbol{u}}^n) = \boldsymbol{\theta}_{\boldsymbol{u}}^n - \eta_{\boldsymbol{u}} G_{\boldsymbol{u}}^{+}(\boldsymbol{\theta}_{\boldsymbol{u}}^n)\nabla\mathcal{J}_{\boldsymbol{u}}(\boldsymbol{\theta}_{\boldsymbol{u}}^n),
\end{equation}
where $G_{\boldsymbol{u}}(\boldsymbol{\theta}_{\boldsymbol{u}}^n)$ can be computed as:
\begin{align}
    (G_{\boldsymbol{u}}(\boldsymbol{\theta}_{\boldsymbol{u}}^n))_{ij}&=\frac{w_f}{N_f}\sum_{k=1}^{N_f}\left(\nabla\cdot\boldsymbol{\sigma}\left(\frac{\partial \hat{\boldsymbol{u}}(\boldsymbol{x}_f^k,t_f^k;\boldsymbol{\theta}_{\boldsymbol{u}}^n)}{\partial \theta_{\boldsymbol{u},i}}\right)\right)\cdot\left(\nabla\cdot\boldsymbol{\sigma}\left(\frac{\partial \hat{\boldsymbol{u}}(\boldsymbol{x}_f^k,t_f^k;\boldsymbol{\theta}_{\boldsymbol{u}}^n)}{\partial \theta_{\boldsymbol{u},j}}\right)\right) \notag\\
        &\quad + \frac{w_{bD}}{N_{bD}}\sum_{k=1}^{N_{bD}}\left(\frac{\partial \hat{\boldsymbol{u}}(\boldsymbol{x}_{bD}^k,t_{bD}^k;\boldsymbol{\theta}_{\boldsymbol{u}}^n)}{\partial \theta_{\boldsymbol{u},i}}\right)\cdot\left(\frac{\partial \hat{\boldsymbol{u}}(\boldsymbol{x}_{bD}^k,t_{bD}^k;\boldsymbol{\theta}_{\boldsymbol{u}}^n)}{\partial \theta_{\boldsymbol{u},j}}\right) \notag\\
        &\quad + \frac{w_{bN}}{N_{bN}}\sum_{k=1}^{N_{bN}}\left(\boldsymbol{\sigma}\left(\frac{\partial \hat{\boldsymbol{u}}(\boldsymbol{x}_{bN}^k,t_{bN}^k;\boldsymbol{\theta}_{\boldsymbol{u}}^n)}{\partial \theta_{\boldsymbol{u},i}}\right)\cdot\boldsymbol{n}\right)\cdot\left(\boldsymbol{\sigma}\left(\frac{\partial \hat{\boldsymbol{u}}(\boldsymbol{x}_{bN}^k,t_{bN}^k;\boldsymbol{\theta}_{\boldsymbol{u}}^n)}{\partial \theta_{\boldsymbol{u},j}}\right)\cdot\boldsymbol{n}\right) \notag\\
        &\quad + \frac{w_0}{N_0}\sum_{k=1}^{N_0}\left(\frac{\partial \hat{\boldsymbol{u}}(\boldsymbol{x}_0^k,0;\boldsymbol{\theta}_{\boldsymbol{u}}^n)}{\partial \theta_{\boldsymbol{u},i}}\right)\cdot\left(\frac{\partial \hat{\boldsymbol{u}}(\boldsymbol{x}_0^k,0;\boldsymbol{\theta}_{\boldsymbol{u}}^n)}{\partial \theta_{\boldsymbol{u},j}}\right).
\end{align}
and $\eta_{\boldsymbol{u}}$ is the learning rate for the displacement network determined through a line search step:
\begin{equation}
    \eta_{\boldsymbol{u}} = \arg\min_{0<\eta\le 1} \mathcal{J}_{\boldsymbol{u}}\left(\boldsymbol{\theta}_{\boldsymbol{u}}^n - \eta\nabla^E\mathcal{J}_{\boldsymbol{u}}(\boldsymbol{\theta}_{\boldsymbol{u}}^n)\right),
\end{equation}

\textbf{Three-field FS-ENGD-PINN loss:} Similarly, at the $n$-th iteration, we construct the loss function for the $p$-subproblem \eqref{eq:FS_3field_1}:
\begin{align}
    \mathcal{J}_p(\boldsymbol{\theta}_p) &= w_f\mathcal{J}_p^{\text{PDE}}(\boldsymbol{\theta}_{\boldsymbol{u},\xi}^n,\boldsymbol{\theta}_p^n;\boldsymbol{\theta}_p) + w_{bD}\mathcal{J}_p^{\text{Dir}}(\boldsymbol{\theta}_p) + w_{bN}\mathcal{J}_p^{\text{Neu}}(\boldsymbol{\theta}_p) + w_0\mathcal{J}_p^{\text{IC}}(\boldsymbol{\theta}_p),\notag\\
        &= \frac{w_f}{2N_f}\sum_{i=1}^{N_f}\left|\left(c_0+\frac{\alpha^2}{\lambda}\right)\partial_t\hat{p}(\boldsymbol{x}_f^i,t_f^i;\boldsymbol{\theta}_p) - \nabla\cdot\boldsymbol{\kappa}\nabla \hat{p}(\boldsymbol{x}_f^i,t_f^i;\boldsymbol{\theta}_p) \right.\notag\\
        &\quad\left. - f(\boldsymbol{x}_f^i,t_f^i) - \frac{\alpha}{\lambda}\partial_t\hat{\xi}(\boldsymbol{x}_f^i,t_f^i;\boldsymbol{\theta}_{\boldsymbol{u},\xi})\right|^2 \notag\\
        &\quad + \frac{w_{bD}}{2N_{bD}}\sum_{i=1}^{N_{bD}}\left| \hat{p}(\boldsymbol{x}_{bD}^i,t_{bD}^i;\boldsymbol{\theta}_p) - p_D(\boldsymbol{x}_{bD}^i,t_{bD}^i) \right|^2 \notag\\
        &\quad + \frac{w_{bN}}{2N_{bN}}\sum_{i=1}^{N_{bN}}\left| -\boldsymbol{\kappa}\nabla \hat{p}(\boldsymbol{x}_{bN}^i,t_{bN}^i;\boldsymbol{\theta}_p)\cdot\boldsymbol{n}(\boldsymbol{x}_{bN}^i) - q_N(\boldsymbol{x}_{bN}^i,t_{bN}^i) \right|^2 \notag\\
        &\quad + \frac{w_0}{2N_0}\sum_{i=1}^{N_0}\left| \hat{p}(\boldsymbol{x}_0^i,0;\boldsymbol{\theta}_p) - p_0(\boldsymbol{x}_0^i) \right|^2,
\end{align}
and the loss function for the $(\boldsymbol{u},\xi)$-subproblem \eqref{eq:FS_3field_2}:
\begin{align}
    \mathcal{J}_{\boldsymbol{u},\xi}(\boldsymbol{\theta}_{\boldsymbol{u},\xi}) &= w_f\left(\mathcal{J}_{\boldsymbol{u}}^{\text{PDE}}(\boldsymbol{\theta}_{p}^{n+1};\boldsymbol{\theta}_{\boldsymbol{u},\xi}) + \mathcal{J}_{\xi}^{\text{PDE}}(\boldsymbol{\theta}_{p}^{n+1};\boldsymbol{\theta}_{\boldsymbol{u},\xi})\right) + w_{bD}\mathcal{J}_{\boldsymbol{u}}^{\text{Dir}}(\boldsymbol{\theta}_{\boldsymbol{u},\xi}) \notag\\
        &\quad + w_{bN}\mathcal{J}_{\boldsymbol{u}}^{\text{Neu}}(\boldsymbol{\theta}_{p}^{n+1};\boldsymbol{\theta}_{\boldsymbol{u},\xi}) + w_0\left(\mathcal{J}_{\boldsymbol{u}}^{\text{IC}}(\boldsymbol{\theta}_{\boldsymbol{u},\xi}) +\mathcal{J}_{\xi}^{\text{IC}}(\boldsymbol{\theta}_{\boldsymbol{u},\xi}) \right),\notag\\
        &= \frac{w_f}{2N_f}\sum_{i=1}^{N_f}\left| -\nabla\cdot(2\mu\boldsymbol{\epsilon}(\hat{\boldsymbol{u}}(\boldsymbol{x}_f^i,t_f^i;\boldsymbol{\theta}_{\boldsymbol{u},\xi}))-\hat{\xi}(\boldsymbol{x}_f^i,t_f^i;\boldsymbol{\theta}_{\boldsymbol{u},\xi})\mathbb{I}) - \boldsymbol{g}(\boldsymbol{x}_f^i,t_f^i) \right|^2 \notag\\
        &\quad + \frac{w_f}{2N_f}\sum_{i=1}^{N_f}\left|-\nabla\cdot\hat{\boldsymbol{u}}(\boldsymbol{x}_f^i,t_f^i;\boldsymbol{\theta}_{\boldsymbol{u},\xi}) + \frac{\alpha}{\lambda}\hat{p}^{n+1}(\boldsymbol{x}_f^i,t_f^i;\boldsymbol{\theta}_{p}^{n+1})-\frac{1}{\lambda}\hat{\xi}(\boldsymbol{x}_f^i,t_f^i;\boldsymbol{\theta}_{\boldsymbol{u},\xi}) \right|^2 \notag\\
        &\quad + \frac{w_{bD}}{2N_{bD}}\sum_{i=1}^{N_{bD}}\left| \hat{\boldsymbol{u}}(\boldsymbol{x}_{bD}^i,t_{bD}^i;\boldsymbol{\theta}_{\boldsymbol{u},\xi}) - \boldsymbol{u}_D(\boldsymbol{x}_{bD}^i,t_{bD}^i) \right|^2 \notag\\
        &\quad + \frac{w_{bN}}{2N_{bN}}\sum_{i=1}^{N_{bN}}\left| (2\mu\boldsymbol{\epsilon}(\hat{\boldsymbol{u}}(\boldsymbol{x}_{bN}^i,t_{bN}^i;\boldsymbol{\theta}_{\boldsymbol{u},\xi}))-\hat{\xi}(\boldsymbol{x}_{bN}^i,t_{bN}^i;\boldsymbol{\theta}_{\boldsymbol{u},\xi})\mathbb{I})\cdot\boldsymbol{n}(\boldsymbol{x}_{bN}^i)\right.\notag\\
        &\quad\left. - \boldsymbol{\sigma}_N\left(\boldsymbol{x}_{bN}^i,t_{bN}^i\right) \right|^2 + \frac{w_0}{2N_0}\sum_{i=1}^{N_0}\left| \hat{\boldsymbol{u}}(\boldsymbol{x}_0^i,0;\boldsymbol{\theta}_{\boldsymbol{u},\xi}) - \boldsymbol{u}_0(\boldsymbol{x}_0^i) \right|^2\notag\\
        &\quad+ \frac{w_0}{2N_0}\sum_{i=1}^{N_0}\left| \hat{\xi}(\boldsymbol{x}_0^i,0;\boldsymbol{\theta}_{\boldsymbol{u},\xi}) - \xi_0(\boldsymbol{x}_0^i) \right|^2.
\end{align}
The Gram matrices $G_p(\boldsymbol{\theta}_p^n)$ and $G_{\boldsymbol{u},\xi}(\boldsymbol{\theta}_{\boldsymbol{u},\xi}^n)$ can be computed like the two-field model, and the ENGD updates for $\boldsymbol{\theta}_p$ and $\boldsymbol{\theta}_{\boldsymbol{u},\xi}$ follow the same procedure as previously described.

Then, the overall unified architecture of the FS-ENGD-PINN framework for both models is illustrated in Figure \ref{fig:architecture}. In this conceptual frame, the shared components are depicted with solid lines, representing the baseline two-field model that outputs $\hat{\boldsymbol{u}}$. Extensions specific to the three-field model, i.e., the auxiliary variable $\hat{\xi}$ and its computational pathways, are highlighted with dashed lines and bracketed notation $[\hat{\xi}]$. Note that $\hat{\boldsymbol{u}}[\hat{\xi}]$ refers to $\hat{\boldsymbol{u}}$ in the two-field model, and represents both $\hat{\boldsymbol{u}}$ and $\hat{\xi}$ in the three-field model.
\begin{figure}[!t]
\centering
\includegraphics[width=\linewidth]{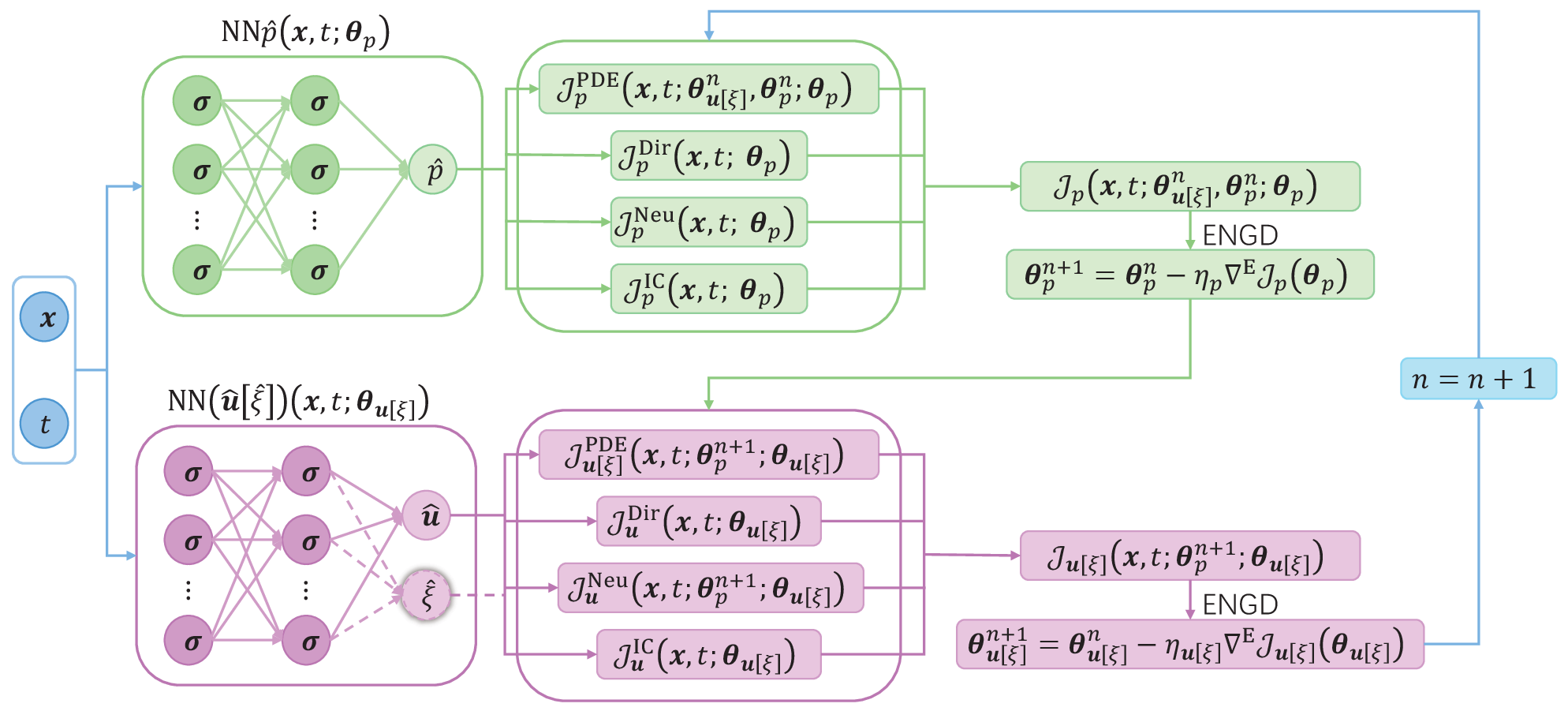}
\caption{The unified network architecture for the FS-ENGD-PINN framework.}\label{fig:architecture}
\end{figure}

\subsection{Convergence Analysis of the FS-ENGD-PINN Framework}\label{subsec:analysis}

In this subsection, we analyze the convergence properties of the proposed FS-ENGD-PINN framework. 
The analysis bridges the contraction mapping property of the fixed-stress splitting iterative scheme and the 
optimization efficiency of the Energy Natural Gradient Descent.
Since the three-field FS scheme is a special case of the standard FS splitting 
(with $\beta_{FS}=\frac{\alpha^2}{\lambda}$), we first present the analysis for the two-field formulation.

Let $(p^*, \boldsymbol{u}^*, \xi^*)$ denote the exact solution of the coupled Biot's system. 
At the $n$-th iteration, let $(\hat{p}^n, \hat{\boldsymbol{u}}^n, \hat{\xi}^n)$ denote the neural network approximations. 
To rigorously quantify the error, we introduce the reference solutions $(p^{n+1}, \boldsymbol{u}^{n+1}, \xi^{n+1})$ for the current iteration step. 
To align with the contraction property of the fixed-stress splitting scheme (Theorem \ref{thm:FS_2field}), we define the error metric for both fields based on the norms of their time derivatives. 
Specifically, we employ the $L^2$-norm of the time derivative for pressure, and the $H^1$-norm of the time derivative (velocity) for displacement:
\begin{equation}
    \| p \|_{\partial_t} \coloneqq \| \partial_t p \|_{L^2(0,T; L^2(\Omega))}, \quad \| \boldsymbol{u} \|_{\boldsymbol{V}_{\partial_t}} \coloneqq \| \partial_t \boldsymbol{u} \|_{L^2(0,T; H^1(\Omega)^d)}.
\end{equation}
Then, define the total error at iteration $n$ as 
\begin{equation}\label{eq:total_error}
    \mathcal{E}^n \coloneqq \| \hat{p}^n - p^* \|_{\partial_t} + \| \hat{\boldsymbol{u}}^n - \boldsymbol{u}^* \|_{\boldsymbol{V}_{\partial_t}}.
\end{equation}
Note that convergence in these norms implies convergence of the functions $p$ and $\boldsymbol{u}$ themselves, provided the initial conditions are satisfied.

In our framework, the ENGD optimizer is employed to minimize the loss functions $\mathcal{J}_p$ and $\mathcal{J}_{\boldsymbol{u}}$. 
The global minimum of the loss corresponds precisely to the reference solution $(p^{n+1}, \boldsymbol{u}^{n+1})$.
Relying on the Universal Approximation Theorem \cite{cybenko1989approximation, hornik1989multilayer}, we assume that the neural network possesses sufficient capacity to approximate the solution space. 
Consequently, the total error is dominated by the optimization residual, which allows us to establish the following precision guarantee \cite{de2024error}:

\begin{lemma}[Optimization Precision and Approximation Capability]\label{lemma:engd}
    Let $\mathcal{N}$ be a neural network class with sufficient width and depth. 
    According to the Universal Approximation Theorem, for any arbitrary tolerance $\epsilon > 0$, there exists an optimal network configuration within $\mathcal{N}$ such that the approximation error relative to the reference solution is bounded by $\epsilon$.
    With the ENGD optimizer effectively navigating the loss landscape, we define $\delta_{max}$ as the effective error bound in the strong norm $H^1([0, T]; H^1(\Omega))$ that accounts for both this theoretical approximation limit $\epsilon$ and the numerical optimization residual.
    Crucially, $\delta_{max} \to 0$ as the network capacity increases ($\epsilon \to 0$) and the optimization converges. 
    Since the norms defined above involving time derivatives are bounded by this strong norm, it follows that:

    \begin{equation}\label{eq:opt_bound}
        \| \hat{p}^{n+1} - p^{n+1} \|_{\partial_t} \le \delta_{max}, \quad \| \hat{\boldsymbol{u}}^{n+1} - \boldsymbol{u}^{n+1} \|_{\boldsymbol{V}_{\partial_t}} \le \delta_{max}.
    \end{equation}
    For the three-field case, a similar bound holds for the total pressure using the same norm: $\| \hat{\xi}^{n+1} - \xi^{n+1} \|_{\partial_t} \le \delta_{max}$.
\end{lemma}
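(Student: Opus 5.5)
The plan has three steps. First, realize $\delta_{max}$ through an approximation-plus-optimization splitting. Second, show that this strong-norm error controls the two time-derivative norms used in the lemma. Third, make each of the two ingredients of $\delta_{max}$ small.

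\textbf{Step 1 (splitting).} Fix the reference solution $p^{n+1}$ of the subproblem \eqref{eq:FS_2field_1}, and likewise $\boldsymbol{u}^{n+1}$ of \eqref{eq:FS_2field_2}, or $(\boldsymbol{u}^{n+1},\xi^{n+1})$ of \eqref{eq:FS_3field_2}. For each network class $\mathcal{N}$, let $\tilde p\in\mathcal{N}$ be a best approximation in the strong norm $H^1([0,T];H^1(\Omega))$; if no minimizer exists, take a near-minimizer. The triangle inequality gives
\begin{equation*}
\|\hat p^{n+1}-p^{n+1}\|_{H^1(H^1)}\le \|\hat p^{n+1}-\tilde p\|_{H^1(H^1)}+\|\tilde p-p^{n+1}\|_{H^1(H^1)} .
\end{equation*}
The second term is the approximation error $\epsilon$. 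The first term is the optimization residual. We define $\delta_{max}$ as the maximum of the analogous sums over all fields ($p$, $\boldsymbol{u}$, and $\xi$ in the three-field case) and over the iterations considered.

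\textbf{Step 2 (norm comparison).} This step is immediate:
\begin{equation*}
\|\partial_t q\|_{L^2(0,T;L^2(\Omega))}\le\|\partial_t q\|_{L^2(0,T;H^1(\Omega))}\le \|q\|_{H^1([0,T];H^1(\Omega))}.
\end{equation*}
Applied componentwise, this yields \eqref{eq:opt_bound}. For $\xi$, the bound $\|\hat\xi^{n+1}-\xi^{n+1}\|_{\partial_t}\le\delta_{max}$ follows in the same way, provided $\xi$ carries the same strong norm.

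\textbf{Step 3 (smallness of $\epsilon$).} This needs universal approximation in a Sobolev sense, not merely uniform approximation, since derivatives in $t$ and $x$ must converge. I would assume the reference solutions are smooth, e.g. $C^2$ in space-time on the compact closure of $\Omega\times(0,T)$. For smooth activations such as $\tanh$, density of MLPs in $C^k$ on compacta (Hornik's derivative-approximation result) then gives $\epsilon\to0$ as width grows.

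\textbf{Step 3 (smallness of the optimization residual).} I would use the earlier Proposition on ENGD. Each loss $\mathcal{J}_p$, $\mathcal{J}_{\boldsymbol{u}}$ is quadratic in the network output, and its update is the $a$-orthogonal projection of the error onto the tangent space. So if ENGD reaches the loss value $\mathcal{J}(\tilde p)$ up to a tolerance, one needs a stability (coercivity) estimate for the residual loss. This means showing that the loss in the continuous limit is equivalent to $\|\cdot-p^{n+1}\|^2$ in a norm controlling $H^1(H^1)$. That estimate is the approach of \cite{de2024error}, which is the natural result to cite for converting a small residual into a small error.

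The main obstacle is this last conversion. The PINN residual norm for a parabolic problem controls only $L^2(H^2)\cap H^1(L^2)$-type quantities, which are not obviously the full $H^1(H^1)$ norm. One must also pass from quadrature to the continuous loss. I would handle this by assuming extra regularity of the data and either differentiating the subproblem in time or adding the time-derivative residual. This is also why $\delta_{max}$ is defined as an effective bound in the strong norm rather than derived from the loss alone.
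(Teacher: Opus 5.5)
Your Steps 1 and 2 are, in substance, the paper's whole argument. In the paper the lemma is essentially definitional. $\delta_{max}$ is \emph{posited} as an effective bound in $H^1(0,T;H^1(\Omega))$ on the gap between each network output and the reference solution of its FS subproblem, lumping together the approximation error and the optimization residual. The only deduction is your chain $\|\partial_t q\|_{L^2(0,T;L^2(\Omega))}\le\|\partial_t q\|_{L^2(0,T;H^1(\Omega))}\le\|q\|_{H^1(0,T;H^1(\Omega))}$. The claim $\delta_{max}\to 0$ is not proved in the paper; it is a hypothesis backed only by appeals to universal approximation and to \cite{de2024error}.

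Your Step 3 therefore tries to prove what the paper assumes, and the fixes you propose would not close it with the paper's losses.

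The pressure is not the real problem. The lemma only needs $\|\partial_t e_p\|_{L^2(L^2)}$, and testing the error equation with $\partial_t e_p$ bounds this by the $L^2$ residual plus $\|\nabla e_p(\cdot,0)\|_{L^2}$ and boundary terms. Admittedly, the $L^2$ initial and boundary penalties do not control those terms unless the conditions are imposed exactly.

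The genuine obstruction is $\boldsymbol u$ and $\xi$, because $\mathcal J_{\boldsymbol u}$ and $\mathcal J_{\boldsymbol u,\xi}$ contain no time derivative at all. Add $\eta\sin(t/\eta^2)\boldsymbol\phi(\boldsymbol x)$ with $\boldsymbol\phi\in C_c^\infty(\Omega)^d$ to $\hat{\boldsymbol u}$. This leaves the initial, Dirichlet and Neumann terms unchanged and perturbs every PDE residual by $O(\eta)$, yet the $\|\cdot\|_{\boldsymbol V_{\partial_t}}$-error grows like $\eta^{-1}$. So no residual-stability estimate can yield the displacement bound, and the same construction with a scalar $\psi\in C_c^\infty(\Omega)$ defeats the $\xi$ bound. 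Extra regularity of the data only regularizes the reference solution, not the network. Differentiating the subproblem in time would require control of $\partial_t$ of the residual, which the loss never sees. Only your third option, modifying the loss, would work, and that changes the method.

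Three smaller points.
\begin{enumerate}
\item The ENGD proposition identifies the update direction as a projected Newton step. It gives no guarantee that the nonconvex parameter iteration reaches $\mathcal J(\tilde p)$, so that is itself an assumption.
\item $\tilde p$ should be a $C^2$ (Hornik-type) approximant rather than an $H^1(H^1)$ best approximant. The loss sees $\nabla\cdot\boldsymbol\kappa\nabla p$ and Neumann traces, so an $H^1(H^1)$-close function need not have small loss.
\item Theorem~\ref{thm:convergence} lets $n\to\infty$, while $p^{n+1}$ and $\boldsymbol u^{n+1}$ depend on the previous network outputs. Hence $\delta_{max}$ must be uniform over all FS iterations, not a maximum over those ``considered''. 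That requires uniform regularity bounds on the reference solutions.
\end{enumerate}

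The sound version is the paper's: keep Steps 1--2, and state the smallness of $\delta_{max}$, uniformly in $n$, as an explicit hypothesis.
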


\begin{remark}[Superiority and Trade-offs of ENGD vs. First-Order Optimizers]
The magnitude of $\delta_{max}$ inherently depends on the optimizer. First-order methods like Adam \cite{cai2023combination} rely on Euclidean geometry and frequently stagnate at a large residual error ($\delta_{Adam}$) within Biot's ill-conditioned loss landscape. Conversely, ENGD utilizes the Energy Gram matrix to approximate the Newton direction in function space, effectively mitigating stiffness and achieving a significantly tighter bound ($\delta_{max} \ll \delta_{Adam}$).
While constructing and approximating the Gram matrix inherently increases the computational cost and memory footprint per epoch, this overhead is heavily offset. As demonstrated in our numerical experiments, ENGD requires drastically fewer epochs to achieve high-fidelity convergence, thereby maintaining highly competitive overall computational efficiency.
\end{remark}

Based on the optimization bound and the physical stability of the fixed-stress splitting, we present the main convergence result:

\begin{theorem}[Convergence of FS-ENGD-PINN]\label{thm:convergence}
    Under the assumptions of Theorem \ref{thm:FS_2field} and Lemma \ref{lemma:engd}, the sequences of network approximations converge asymptotically in the time-derivative norms. Specifically:
    \begin{equation}
        \limsup_{n\to\infty} \| \hat{p}^n - p^* \|_{\partial_t} \le \frac{\delta_{max}}{1 - L_{FS}},
    \end{equation}
    \begin{equation}
        \limsup_{n\to\infty} \| \hat{\boldsymbol{u}}^n - \boldsymbol{u}^* \|_{\boldsymbol{V}_{\partial_t}} \le \left( 1 + \frac{C_u}{1 - L_{FS}} \right) \delta_{max}.
    \end{equation}
    Furthermore, for the three-field formulation, the total pressure $\xi$ also converges unconditionally:
    \begin{equation}
        \limsup_{n\to\infty} \| \hat{\xi}^n - \xi^* \|_{\partial_t} \le \delta_{max} + \frac{C_{\xi} \delta_{max}}{1 - L_{FS}},
    \end{equation}
    where $L_{FS} < 1$ is the contraction factor, and $C_u, C_{\xi}$ are coupling constants. Consequently, the total error $\mathcal{E}^n$ is bounded as $n \to \infty$.
\end{theorem}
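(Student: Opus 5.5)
We argue by a perturbed contraction: the exact fixed-stress step contracts the pressure error by a fixed factor, and each PINN step adds an optimization error of size at most $\delta_{max}$. The central object is the exact fixed-stress map applied to the \emph{network} iterates. At step $n$, the reference solutions $(p^{n+1},\boldsymbol{u}^{n+1})$ are the exact solutions of the subproblems \eqref{eq:FS_2field_1}--\eqref{eq:FS_2field_2} with data $(\hat p^n,\hat{\boldsymbol u}^n)$; that is, $(p^{n+1},\boldsymbol{u}^{n+1})=\mathcal S(\hat p^n,\hat{\boldsymbol u}^n)$. The exact solution $(p^*,\boldsymbol u^*)$ is a fixed point of $\mathcal S$, so Theorem \ref{thm:FS_2field}, applied with $(\hat p^n,\hat{\boldsymbol u}^n)$ as the previous iterate, gives
\[
\|p^{n+1}-p^*\|_{\partial_t}\le \sqrt{L_{FS}}\,\|\hat p^n-p^*\|_{\partial_t}\le L\,\|\hat p^n-p^*\|_{\partial_t}.
\]
Here $L$ denotes the contraction constant in the norm $\|\cdot\|_{\partial_t}$. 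If one wants exactly $L_{FS}$ as in the statement, one can work with squared norms or simply note that $\sqrt{L_{FS}}<1$ and absorb the difference. A subtle point is that the contraction estimate in Theorem \ref{thm:FS_2field} is phrased in terms of the pressure error alone. This is legitimate only because in the fixed-stress argument of \cite{mikelic2013convergence} the mechanics step ties $\nabla\cdot\partial_t e_{\boldsymbol u}^n$ to $\partial_t e_p^n$. For network iterates this tie holds only up to the residual of the previous $\boldsymbol u$-solve. I would handle this by carrying an extra $O(\delta_{max})$ term, which can be absorbed into $\delta_{max}$ after adjusting constants.

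\textbf{Pressure recursion.} By the triangle inequality and Lemma \ref{lemma:engd},
\[
a_{n+1}:=\|\hat p^{n+1}-p^*\|_{\partial_t}\le \|\hat p^{n+1}-p^{n+1}\|_{\partial_t}+\|p^{n+1}-p^*\|_{\partial_t}\le \delta_{max}+L_{FS}\,a_n .
\]
Unrolling gives $a_n\le L_{FS}^n a_0+\delta_{max}\sum_{k<n}L_{FS}^k$. Taking $\limsup$ yields $\limsup_n a_n\le \delta_{max}/(1-L_{FS})$.

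\textbf{Displacement.} Write $\hat{\boldsymbol u}^{n+1}-\boldsymbol u^*=(\hat{\boldsymbol u}^{n+1}-\boldsymbol u^{n+1})+(\boldsymbol u^{n+1}-\boldsymbol u^*)$. The first term is at most $\delta_{max}$ by Lemma \ref{lemma:engd}. For the second, subtract the exact momentum equation from \eqref{eq:FS_2field_2} and differentiate in time: $\partial_t(\boldsymbol u^{n+1}-\boldsymbol u^*)$ solves a homogeneous linear elasticity problem with load $-\alpha\nabla\partial_t(\hat p^{n+1}-p^*)$. Testing with $\partial_t(\boldsymbol u^{n+1}-\boldsymbol u^*)$, integrating the load term by parts, and using Korn's inequality on $\boldsymbol V$ gives
\[
\|\boldsymbol u^{n+1}-\boldsymbol u^*\|_{\boldsymbol V_{\partial_t}}\le C_u\,\|\hat p^{n+1}-p^*\|_{\partial_t},
\]
with $C_u\sim \alpha\sqrt d/(2\mu\, C_{Korn})$. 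Combining this with the pressure limsup gives $(1+C_u/(1-L_{FS}))\delta_{max}$.

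\textbf{Three-field case.} Theorem \ref{thm:FS_3field} identifies the scheme with $\beta_{FS}=\alpha^2/\lambda$, so the pressure argument carries over unchanged. For $\xi$, the reference $\xi^{n+1}$ solves the Stokes-type $(\boldsymbol u,\xi)$ problem with constraint data $\frac{\alpha}{\lambda}\hat p^{n+1}$. The Brezzi inf-sup stability of this problem, which is uniform in $\lambda$ as in \cite{oyarzua2016locking}, gives $\|\xi^{n+1}-\xi^*\|_{\partial_t}\le C_\xi\|\hat p^{n+1}-p^*\|_{\partial_t}$. Adding the $\delta_{max}$ from Lemma \ref{lemma:engd} gives the stated bound. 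Boundedness of $\mathcal E^n$ then follows directly.

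\textbf{Main obstacle.} The hardest step is justifying the application of Theorem \ref{thm:FS_2field} to inexact iterates. In particular, it requires that the $\beta_{FS}\partial_t\hat p^n$ and $\alpha\nabla\cdot\partial_t\hat{\boldsymbol u}^n$ terms combine as in the exact analysis, even though $\hat{\boldsymbol u}^n$ does not solve the mechanics equation exactly. My first attempt would be to repeat the energy estimate of \cite{mikelic2013convergence} with a defect term $\boldsymbol r^n=\hat{\boldsymbol u}^n-\boldsymbol u^n$, bounded by $\delta_{max}$ in $\boldsymbol V_{\partial_t}$. This yields $a_{n+1}\le L_{FS}a_n+C\delta_{max}$, and the constant $C$ can then be folded into the definition of $\delta_{max}$.
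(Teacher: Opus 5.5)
Your proposal follows the paper's argument for $p$ and $\boldsymbol u$. The paper also sets $(p^{n+1},\boldsymbol u^{n+1})=\mathcal S(\hat p^n,\hat{\boldsymbol u}^n)$, gets $a_{n+1}\le\delta_{max}+L_{FS}a_n$ from the triangle inequality and Lemma~\ref{lemma:engd}, and obtains $C_u$ by testing the time-differentiated elasticity error equation with $\partial_t\boldsymbol e_{\boldsymbol u}^{n+1}$ and applying Korn. Two steps differ.

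(i) You drive the reference displacement by the network pressure $\hat p^{n+1}$, which is what $\mathcal J_{\boldsymbol u}$ actually uses, so you land exactly on $(1+C_u/(1-L_{FS}))\delta_{max}$. The paper drives it by the reference $p^{n+1}$ and applies the contraction once more, which gives the slightly sharper factor $1+C_uL_{FS}/(1-L_{FS})$. Your convention is more consistent with Lemma~\ref{lemma:engd}, and it makes the defect in the next pressure solve exactly $\hat{\boldsymbol u}^n-\boldsymbol u^n$. But then you should drop your opening identification of $\boldsymbol u^{n+1}$ with the displacement component of $\mathcal S(\hat p^n,\hat{\boldsymbol u}^n)$.

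(ii) For $\xi$ the paper uses no saddle-point theory. It writes $\xi^{n+1}-\xi^*=\alpha(p^{n+1}-p^*)-\lambda\nabla\cdot(\boldsymbol u^{n+1}-\boldsymbol u^*)$ and gets $C_\xi=\alpha+\lambda\sqrt d\,C_u$. That is elementary, but it grows linearly in $\lambda$, so the ``unconditional'' $\xi$-bound degenerates exactly as $\nu\to 0.5$. Your Brezzi argument gives a $\lambda$-uniform $C_\xi$. The price is that you need the divergence inf-sup on $\boldsymbol V$, so $|\Gamma_{\boldsymbol uN}|>0$, or a separate treatment of the mean of $\xi$. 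A simpler route: drop the $\mu$-term in the energy identity the paper already writes. That gives $\lambda\|\nabla\cdot\partial_t\boldsymbol e_{\boldsymbol u}^{n+1}\|_{L^2}\le\alpha\|\partial_t e_p^{n+1}\|_{L^2}$, hence $C_\xi\le 2\alpha$.

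The two ``subtle points'' you raise are real, and the paper's proof skips both. It writes $\|p^{n+1}-p^*\|_{\partial_t}\le L_{FS}\|\hat p^n-p^*\|_{\partial_t}$ directly from Theorem~\ref{thm:FS_2field} and applies it to network iterates with no defect term.

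Your remedy for the first point is wrong, though. Theorem~\ref{thm:FS_2field} contracts squared norms, so the factor in $\|\cdot\|_{\partial_t}$ is $\sqrt{L_{FS}}>L_{FS}$. Even the sharper norm factor you can read off the same energy argument without Young's inequality, $\beta_{FS}/(\beta_{FS}+c_0)$, still exceeds $L_{FS}$. Neither squaring nor ``absorbing'' recovers $\delta_{max}/(1-L_{FS})$: optimizing a squared recursion again gives $\delta_{max}/(1-\sqrt{L_{FS}})$. Meanwhile your own pressure recursion quietly switches back to $L_{FS}$.

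Similarly, your defect argument gives $a_{n+1}\le L\,a_n+\bigl(1+\alpha\sqrt d/(c_0+\beta_{FS})\bigr)\delta_{max}$. ``Folding the constant in'' therefore proves the displayed bounds only for an enlarged $\delta_{max}$, not the one in Lemma~\ref{lemma:engd}.

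The conclusion, convergence to an $O(\delta_{max})$ neighbourhood, survives both corrections. You should state the theorem with the norm contraction factor and the enlarged constant, rather than claim the displayed constants.
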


\begin{proof}
    We prove the convergence by analyzing the stability of the splitting scheme and its interaction with the optimization error in a decoupled manner.
    
    \textbf{Step 1: Stability and Convergence of Pressure.}
    Let $\mathcal{S}$ denote the fixed-stress splitting operator. By definition, $(p^{n+1}, \boldsymbol{u}^{n+1}) = \mathcal{S}(\hat{p}^n, \hat{\boldsymbol{u}}^n)$ and $(p^*, \boldsymbol{u}^*) = \mathcal{S}(p^*, \boldsymbol{u}^*)$.
    According to Theorem \ref{thm:FS_2field}, the operator $\mathcal{S}$ is a contraction with respect to the pressure time-derivative:
    \begin{equation}\label{eq:contraction_p}
        \| p^{n+1} - p^* \|_{\partial_t} \le L_{FS} \| \hat{p}^n - p^* \|_{\partial_t},
    \end{equation}
    where $L_{FS} < 1$. 
    Combining this with the optimization bound $\| \hat{p}^{n+1} - p^{n+1} \|_{\partial_t} \le \delta_{max}$ from Lemma \ref{lemma:engd}, we obtain the error recurrence:
    \begin{equation}
        \| \hat{p}^{n+1} - p^* \|_{\partial_t} \le \| \hat{p}^{n+1} - p^{n+1} \|_{\partial_t} + \| p^{n+1} - p^* \|_{\partial_t} \le \delta_{max} + L_{FS} \| \hat{p}^n - p^* \|_{\partial_t}.
    \end{equation}
    This linear recurrence relation implies asymptotic convergence. Taking the limit superior as $n \to \infty$:
    \begin{equation}\label{eq:p_limit}
        \limsup_{n\to\infty} \| \hat{p}^n - p^* \|_{\partial_t} \le \frac{\delta_{max}}{1 - L_{FS}}.
    \end{equation}
    
    \textbf{Step 2: Stability of Displacement.}
    To derive a compatible bound for the displacement field, we consider the error equation. Let $\boldsymbol{e}_{\boldsymbol{u}}^{n+1} = \boldsymbol{u}^{n+1} - \boldsymbol{u}^*$ and $e_p^{n+1} = p^{n+1} - p^*$. Subtracting the exact momentum equation from the subproblem equation implies $-\nabla\cdot\boldsymbol{\sigma}(\boldsymbol{e}_{\boldsymbol{u}}^{n+1}) = -\alpha \nabla e_p^{n+1}$.
    Differentiating with respect to time yields $-\nabla\cdot\boldsymbol{\sigma}(\partial_t \boldsymbol{e}_{\boldsymbol{u}}^{n+1}) = -\alpha \nabla (\partial_t e_p^{n+1})$.
    Multiplying by $\partial_t \boldsymbol{e}_{\boldsymbol{u}}^{n+1}$, integrating over $\Omega$, and applying integration by parts, we obtain the weak form:
    \begin{equation}
        2\mu\int_\Omega \boldsymbol{\epsilon}(\partial_t \boldsymbol{e}_{\boldsymbol{u}}^{n+1}) : \boldsymbol{\epsilon}(\partial_t \boldsymbol{e}_{\boldsymbol{u}}^{n+1}) d\boldsymbol{x} +\lambda\int_\Omega (\nabla \cdot \partial_t \boldsymbol{e}_{\boldsymbol{u}}^{n+1})^2 d\boldsymbol{x} =\alpha  \int_\Omega (\partial_t e_p^{n+1}) \nabla\cdot (\partial_t \boldsymbol{e}_{\boldsymbol{u}}^{n+1}) d\boldsymbol{x}.
    \end{equation}
    Using Korn's inequality (coercivity) on the left-hand side and the Cauchy-Schwarz inequality on the right-hand side yields:
    \begin{equation}
        C_{coer} \| \partial_t \boldsymbol{e}_{\boldsymbol{u}}^{n+1} \|_{\boldsymbol{V}}^2 \le \alpha \| \partial_t e_p^{n+1} \|_{L^2(\Omega)} \| \nabla \cdot \partial_t \boldsymbol{e}_{\boldsymbol{u}}^{n+1} \|_{L^2(\Omega)}.
    \end{equation}
    Since $\| \nabla \cdot \boldsymbol{v} \|_{L^2} \le \sqrt{d} \| \boldsymbol{v} \|_{\boldsymbol{V}}$, we divide by $\| \partial_t \boldsymbol{e}_{\boldsymbol{u}}^{n+1} \|_{\boldsymbol{V}}$ to obtain:
    \begin{equation}\label{eq:stability_u}
        \| \partial_t \boldsymbol{e}_{\boldsymbol{u}}^{n+1} \|_{\boldsymbol{V}} \le C_u \| \partial_t e_p^{n+1} \|_{L^2(\Omega)},
    \end{equation}
    where $C_u = \frac{\alpha \sqrt{d}}{C_{coer}}$. Squaring and integrating over time gives the space-time norm estimate:
    \begin{equation}\label{eq:contraction_u_dt}
        \| \boldsymbol{u}^{n+1} - \boldsymbol{u}^* \|_{\boldsymbol{V}_{\partial_t}} \le C_u \| p^{n+1} - p^* \|_{\partial_t}.
    \end{equation}
    
    \textbf{Step 3: Convergence of Displacement.}
    Using the triangle inequality and the stability result from Step 2:
    \begin{align}
        \| \hat{\boldsymbol{u}}^{n+1} - \boldsymbol{u}^* \|_{\boldsymbol{V}_{\partial_t}} &\le \| \hat{\boldsymbol{u}}^{n+1} - \boldsymbol{u}^{n+1} \|_{\boldsymbol{V}_{\partial_t}} + \| \boldsymbol{u}^{n+1} - \boldsymbol{u}^* \|_{\boldsymbol{V}_{\partial_t}} \notag \\
        &\le \delta_{max} + C_u \| p^{n+1} - p^* \|_{\partial_t}.
    \end{align}
    Substituting the contraction property of pressure $\| p^{n+1} - p^* \|_{\partial_t} \le L_{FS} \| \hat{p}^n - p^* \|_{\partial_t}$:
    \begin{equation}
        \| \hat{\boldsymbol{u}}^{n+1} - \boldsymbol{u}^* \|_{\boldsymbol{V}_{\partial_t}} \le \delta_{max} + C_u L_{FS} \| \hat{p}^n - p^* \|_{\partial_t}.
    \end{equation}
    Finally, taking the limit superior as $n \to \infty$ and utilizing Eq. \eqref{eq:p_limit}:
    \begin{align}
        \limsup_{n\to\infty} \| \hat{\boldsymbol{u}}^n - \boldsymbol{u}^* \|_{\boldsymbol{V}_{\partial_t}} &\le \delta_{max} + C_u L_{FS} \left( \frac{\delta_{max}}{1 - L_{FS}} \right) \notag \\
        &= \delta_{max} \left( 1 + \frac{C_u L_{FS}}{1 - L_{FS}} \right).
    \end{align}

    \textbf{Step 4: Convergence of Total Pressure $\xi$.}
    For the three-field formulation, the reference solutions satisfy the algebraic constraint exactly: $\xi^{n+1} = \alpha p^{n+1} - \lambda \nabla \cdot \boldsymbol{u}^{n+1}$ (Eq. \eqref{eq:FS_3field_u_const}). The exact solution satisfies the same relation: $\xi^* = \alpha p^* - \lambda \nabla \cdot \boldsymbol{u}^*$.
    Therefore, the error in the reference total pressure is linearly dependent on the errors of reference pressure and displacement:
    \begin{equation}
        \xi^{n+1} - \xi^* = \alpha (p^{n+1} - p^*) - \lambda \nabla \cdot (\boldsymbol{u}^{n+1} - \boldsymbol{u}^*).
    \end{equation}
    Taking the time-derivative norm and applying the triangle inequality:
    \begin{equation}
        \| \xi^{n+1} - \xi^* \|_{\partial_t} \le \alpha \| p^{n+1} - p^* \|_{\partial_t} + \lambda \| \nabla \cdot (\partial_t \boldsymbol{e}_{\boldsymbol{u}}^{n+1}) \|_{L^2}.
    \end{equation}
    Using the bound $\| \nabla \cdot \boldsymbol{v} \|_{L^2} \le \sqrt{d} \| \boldsymbol{v} \|_{\boldsymbol{V}}$ and the stability estimate for displacement from Eq. \eqref{eq:stability_u} ($\| \partial_t \boldsymbol{e}_{\boldsymbol{u}}^{n+1} \|_{\boldsymbol{V}} \le C_u \| p^{n+1} - p^* \|_{\partial_t}$), we derive the bound for the reference solution:
    \begin{equation}
        \| \xi^{n+1} - \xi^* \|_{\partial_t} \le (\alpha + \lambda \sqrt{d} C_u) \| p^{n+1} - p^* \|_{\partial_t} = C_{\xi} \| p^{n+1} - p^* \|_{\partial_t},
    \end{equation}
    where $C_{\xi} = \alpha + \lambda \sqrt{d} C_u$.
    
    Finally, we extend this bound to the network approximation $\hat{\xi}^{n+1}$ using the optimization precision (Lemma \ref{lemma:engd}) and the triangle inequality:
    \begin{align}
        \| \hat{\xi}^{n+1} - \xi^* \|_{\partial_t} &\le \| \hat{\xi}^{n+1} - \xi^{n+1} \|_{\partial_t} + \| \xi^{n+1} - \xi^* \|_{\partial_t} \notag \\
        &\le \delta_{max} + C_{\xi} \| p^{n+1} - p^* \|_{\partial_t}.
    \end{align}
    Substituting the contraction property of pressure $\| p^{n+1} - p^* \|_{\partial_t} \le L_{FS} \| \hat{p}^n - p^* \|_{\partial_t}$ and taking the limit superior:
    \begin{equation}
        \limsup_{n\to\infty} \| \hat{\xi}^n - \xi^* \|_{\partial_t} \le \delta_{max} + C_{\xi} L_{FS} \left( \frac{\delta_{max}}{1 - L_{FS}} \right).
    \end{equation}
    This completes the proof.
\end{proof}

\begin{remark}[Training Dynamics and Practical PINN Quality]
    While Theorem \ref{thm:convergence} establishes asymptotic convergence, practical training entails intentional early-stage oscillations driven by dynamic point resampling to mitigate spatial overfitting. 
    Unlike standard fully-coupled PINNs that suffer from gradient pathologies and require complex adaptive weighting, FS-ENGD-PINN achieves stable, predominantly monotonic convergence for homogeneous media using simple static weights. 
    For highly heterogeneous media exhibiting sharp interfaces (e.g., the layered Terzaghi's problem), late-stage micro-oscillations may persist near the numerical noise floor. This specific challenge is robustly resolved by our unsupervised physical validation strategy (Section 4.2.5). 
    Ultimately, synergizing the FS split with ENGD alleviates global gradient conflicts, yielding a robust solver with simplified hyperparameter tuning.
\end{remark}

\section{Numerical Benchmarks}\label{sec:experiments}
In this section, we present a series of numerical benchmarks to validate the accuracy, efficiency, and robustness of the proposed FS-ENGD-PINN framework for solving Biot's equations.

\subsection{Implementation Details}\label{subsec:details}
\begin{itemize}
    \item Dynamic Sampling Strategy: To mitigate overfitting and enhance generalization, we employ a dynamic Monte Carlo sampling strategy within the FS-ENGD-PINN framework.
    Unlike static grid methods that rely on fixed collocation points, we regenerate the entire set of collocation points for PDE residuals, boundary conditions, and initial conditions from a uniform distribution at the start of each FS iteration.
    This stochastic approach ensures global minimization of the energy functional across the spatiotemporal domain.
    Unless otherwise specified, the number of collocation points is set to $N_f=2000, N_{bD}=N_{bN}=400$, and $N_0=500$.
    The evaluation set is uniformly sampled and its size is fixed at $N_{eval}=50^3=125000$.
    \item Network Architecture and Initialization: We employ fully-connected feedforward neural networks (FNNs) with the hyperbolic tangent (tanh) activation functions. 
    Network parameters are initialized using the Xavier (Glorot) scheme \cite{glorot2010understanding} to maintain variance consistency of activations and gradients.
    By default, separate networks for pressure and displacement are configured with two hidden layers of 32 neurons each, and loss weights are set to $w_f = w_{bD} = w_{bN} = w_0 = 1$.
    \item Efficient Gradient Computation: 
    To enable scalability, the Gram matrix $G(\boldsymbol{\theta})$ is computed via efficient Jacobian contraction $G = \frac{1}{N} J^T J$, avoiding the explicit formation of the per-sample outer product tensor.
    Furthermore, we compute the natural energy gradient update direction $\nabla^E\mathcal{J}(\boldsymbol{\theta})$ by solving a least-squares problem instead of directly calculating the pseudo-inverse of the Gram matrix:
    \begin{equation}
        \nabla^E\mathcal{J}(\boldsymbol{\theta}) = \arg\min_{\varphi\in\mathbb{R}^P}\|G(\boldsymbol{\theta})\varphi - \nabla\mathcal{J}(\boldsymbol{\theta})\|_{L^2}^2,
    \end{equation}
    this formulation significantly enhances numerical stability, particularly where the Gram matrix may be ill-conditioned or singular.
    \item Optimization Schedule: 
    The learning rate $\eta_\cdot$ is dynamically determined via a logarithmic line search in the interval $(0,1]$ at each ENGD update step:
    \begin{equation}
        \eta_\cdot = \arg\min_{0<\eta\le 1} \mathcal{J}_\cdot\left(\boldsymbol{\theta}_p^n - \eta\nabla^E\mathcal{J}_\cdot(\boldsymbol{\theta}_p^n)\right).
    \end{equation}
    We set the maximum number of FS iterations to 20, with 20 epochs per iteration for the ENGD sub-solver, unless otherwise specified.
    \item Baselines and Metrics: 
    We benchmark our framework against: (1) Standard FS-PINN using the Adam optimizer \cite{cai2023combination}, and (2) Coupled PINN using ENGD without splitting.
    Performance is quantified using the Relative $L^2$ Error and $H^1$ Error metrics, defined as follows:
    \begin{align*}
        \mathcal{E}_{L^2}(\hat{v},v_{exact}) &= \left(\frac{V}{N_{eval}}\sum_{i=1}^{N_{eval}} \left| \hat{v}(\boldsymbol{x}_i,t_i) - v_{exact}(\boldsymbol{x}_i,t_i) \right|^2\right)^{1/2},\\
        \mathcal{E}_{RelL^2}(\hat{v},v_{exact}) &= \frac{\mathcal{E}_{L^2}(\hat{v},v_{exact})}{\left(\frac{V}{N_{eval}}\sum_{i=1}^{N_{eval}} \left| v_{exact}(\boldsymbol{x}_i,t_i) \right|^2\right)^{1/2}},\\
            \mathcal{E}_{H^1}(\hat{v},v_{exact}) &= \left((\mathcal{E}_{L^2}(\hat{v},v_{exact}))^2 + (\mathcal{E}_{L^2}(\nabla \hat{v},\nabla v_{exact}))^2\right)^{1/2},
    \end{align*}
    All experiments are implemented in Python using the JAX library \cite{dataset} and executed on an NVIDIA RTX A6000 GPU.
\end{itemize}

\subsection{Validation on Standard Benchmarks (Two-field Model)}
In this subsection, we validate the proposed FS-ENGD-PINN framework on several standard benchmark problems for Biot's equations using the two-field formulation. 
Unless otherwise specified, the material parameters are set as follows: $E=1.0, \nu=0.3, \alpha=1.0, c_0=1.0, \boldsymbol{\kappa}=K\mathbb{I}$, where $K=1.0$.
All 2D domains are defined as $\Omega = [0,1]\times[0,1]$, and the simulation runs until $T_{final}=1.0$.
The boundaries are denoted as $\Gamma_1=\{(1,y);0\le y\le 1\},\Gamma_2=\{(x,0);0\le x\le 1\},\Gamma_3=\{(0,y);0\le y\le 1\},\Gamma_4=\{(x,1);0\le x\le 1\}$.
By default, Dirichlet boundary conditions are imposed on $\Gamma_1$ and $\Gamma_3$, while Neumann boundary conditions are applied on $\Gamma_2$ and $\Gamma_4$.
All initial and Dirichlet boundary conditions are derived directly from the exact solutions.
Consequently, we will only specify the source terms and Neumann boundary conditions in the specific examples below.
Furthermore, Example 1 includes a comparative study of the FS-ENGD-PINN performance against the baseline methods described in Subsection \ref{subsec:details}. 

\subsubsection{Example 1: 2D Linear Biot's Problem with Mixed BCs}
Consider the following source terms:
\begin{align*}
    f(x,y,t)&=(c_0+2K)\sin(x+y)e^t+\alpha(x+y),\\
    \boldsymbol{g}(x,y,t)&=(-(\lambda+2\mu)t+\alpha\cos(x+y)e^t)(1,1)^T.
\end{align*}
The exact solutions and corresponding Neumann boundary conditions are given by:
\begin{align*}
    p(x,y,t)&=\sin(x+y)e^t, \quad\boldsymbol{u}(x,y,t)=\frac{t}{2}(x^2,y^2)^T,\\
    (\boldsymbol{\sigma}(\boldsymbol{u}) - \alpha p \mathbb{I} )\cdot \boldsymbol{n} &= \boldsymbol{h},\quad~~~\nabla p\cdot\boldsymbol{n}=\cos(x+y)e^t(n_1+n_2) \text{ on } \Gamma_2\cup\Gamma_4,
\end{align*}
where $\boldsymbol{h}=2\mu(xn_1,yn_2)^Tt+\lambda(x+y)(n_1,n_2)^Tt-\alpha\sin(x+y)(n_1,n_2)^Te^t$.

Figure \ref{fig:2Dlinear} presents the comparison between the predicted and exact solutions for the pressure and displacement fields at $T_{final}$ using the FS-ENGD-PINN framework. 
The results exhibit excellent agreement with the analytical solutions, demonstrating the high accuracy of the proposed method.
Quantitative evaluation metrics are summarized in Table \ref{tab:comparison_2Dlinear}.
\begin{figure}[!htb]
\centering
\includegraphics[width=\linewidth]{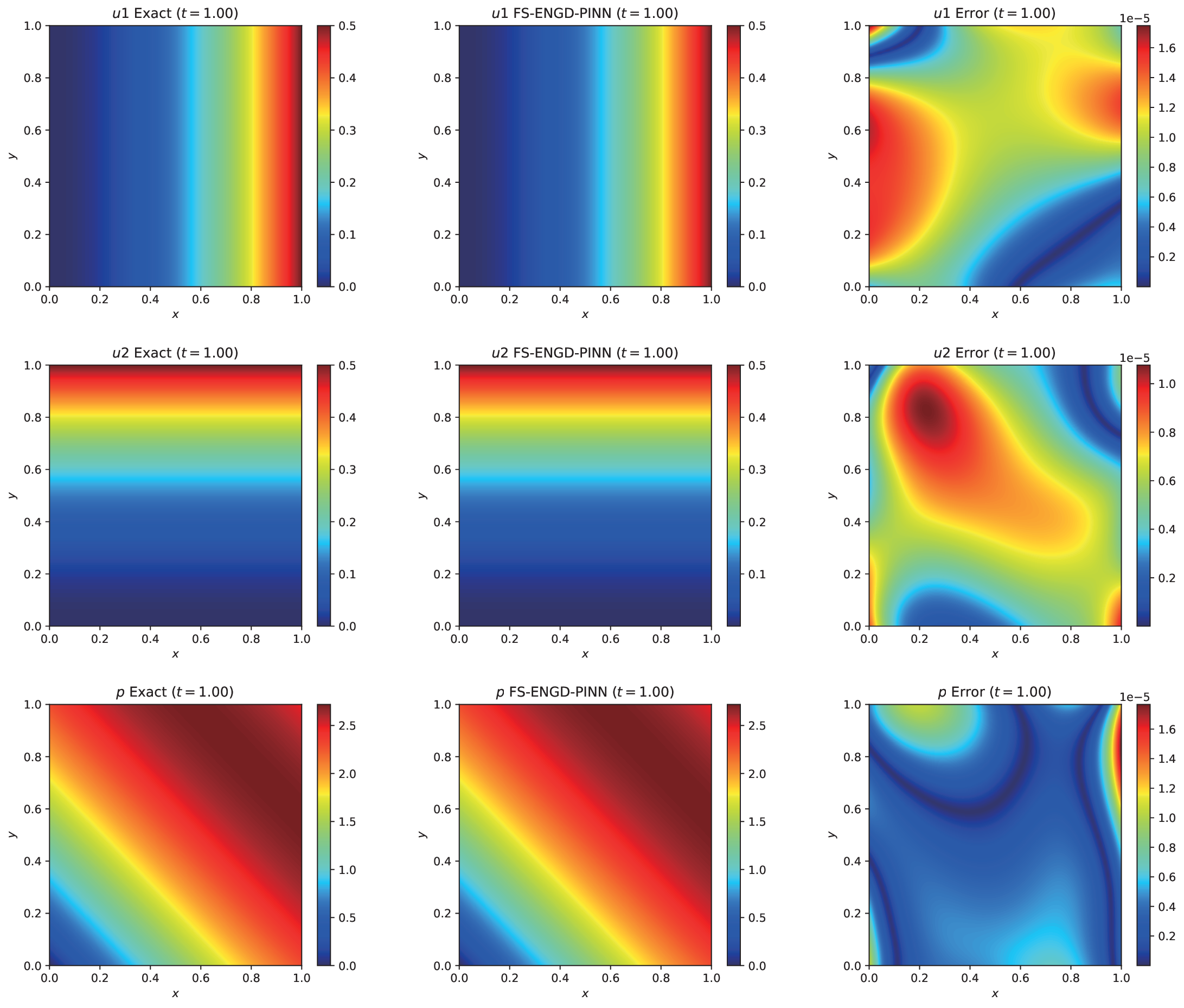}
\caption{Comparison of the predicted and exact solutions for Example 1.}\label{fig:2Dlinear}
\end{figure}

\textbf{Comparative Analysis:} 
To demonstrate the superiority of the proposed framework, we compare FS-ENGD-PINN against two baselines: the standard FS-PINN (optimized with Adam) and the coupled PINN with ENGD (ENGD-PINN).
For a fair comparison, the network architecture for all methods is set to 2 hidden layers with a width of 32 neurons.
The sampling and evaluation points remain consistent with those described in Subsection \ref{subsec:details}.
Following the standard FS-PINN setup in \cite{cai2023combination}, we configure the Adam optimizer with a learning rate of $10^{-4}$, set the number of FS iterations to 50, and train each subproblem for 400 epochs in each iteration. 
For the proposed FS-ENGD-PINN, as we perform 20 FS iterations with each network trained for 20 epochs per iteration, 
it results in a total computational budget equivalent to $20\times 20=400$ epochs.
Accordingly, the coupled ENGD-PINN is trained for 400 global epochs to ensure a fair comparison of computational cost ("Equivalent Epochs").

The quantitative results are summarized in Table \ref{tab:comparison_2Dlinear}, and the convergence histories are visualized in Figure \ref{fig:comparison_errors}.

\begin{table}[!htb]
\caption{Evaluation metrics and runtime for Example 1 using different methods.}
\label{tab:comparison_2Dlinear}
\centering
\resizebox{\linewidth}{!}{
    \begin{tabular}{lccccccc}
    \toprule
    \multirow{2}{*}{Methods} & \multicolumn{3}{c}{Rel$L^2$ Error} & \multicolumn{3}{c}{$H^1$ Error} & \multirow{2}{*}{Runtime (s)} \\
    \cmidrule(lr){2-4} \cmidrule(lr){5-7}
     & $p$ & $u_1$ & $u_2$ & $p$ & $u_1$ & $u_2$ & \\
    \midrule
    FS-ENGD-PINN      & \textbf{1.6950e-06} & \textbf{1.9896e-05} & \textbf{1.5344e-05} & \textbf{1.5530e-05} & \textbf{1.0000e-05} & \textbf{6.8510e-06} & 385.71 \\
    FS-PINN (Adam)  & 2.7701e-04 & 4.0661e-03 & 4.1858e-03 & 2.1867e-03 & 2.4430e-03 & 2.1091e-03 & \textbf{252.62} \\
    ENGD-PINN & 2.2705e-06 & 2.3769e-05 & 2.3140e-05 & 1.6010e-05 & 1.8710e-05 & 1.5072e-05 & 317.18 \\
    \bottomrule
\end{tabular}
}
\end{table}

\begin{figure}[!htb] 
    \centering
    \begin{minipage}[b]{0.48\textwidth} 
        \centering
        \includegraphics[width=\linewidth]{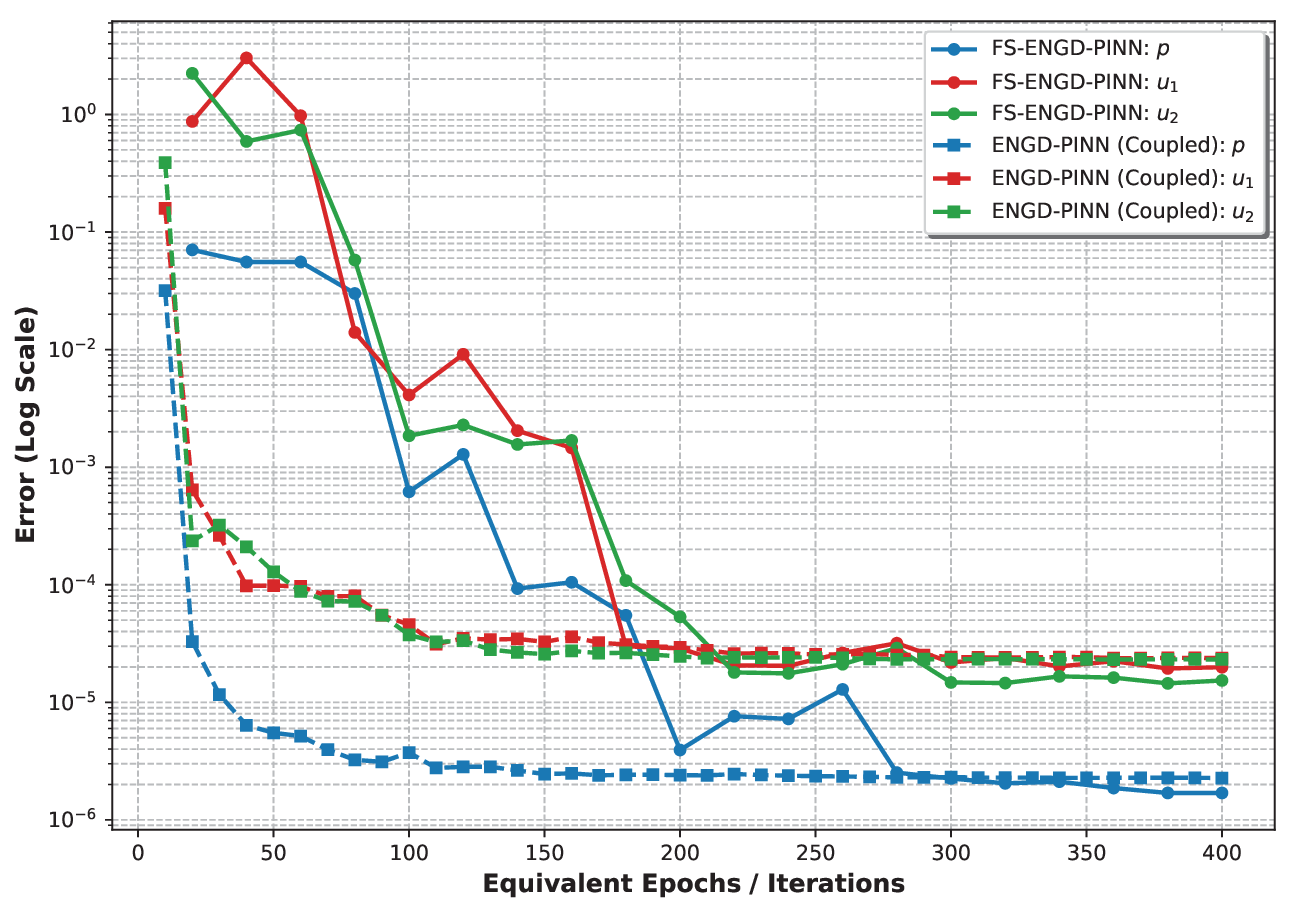}
        \centerline{(a) Convergence history of relative $L^2$ errors}
        \label{fig:comparison_RelL2_Error}
    \end{minipage}
    \hfill
    \begin{minipage}[b]{0.48\textwidth} 
        \centering
        \includegraphics[width=\linewidth]{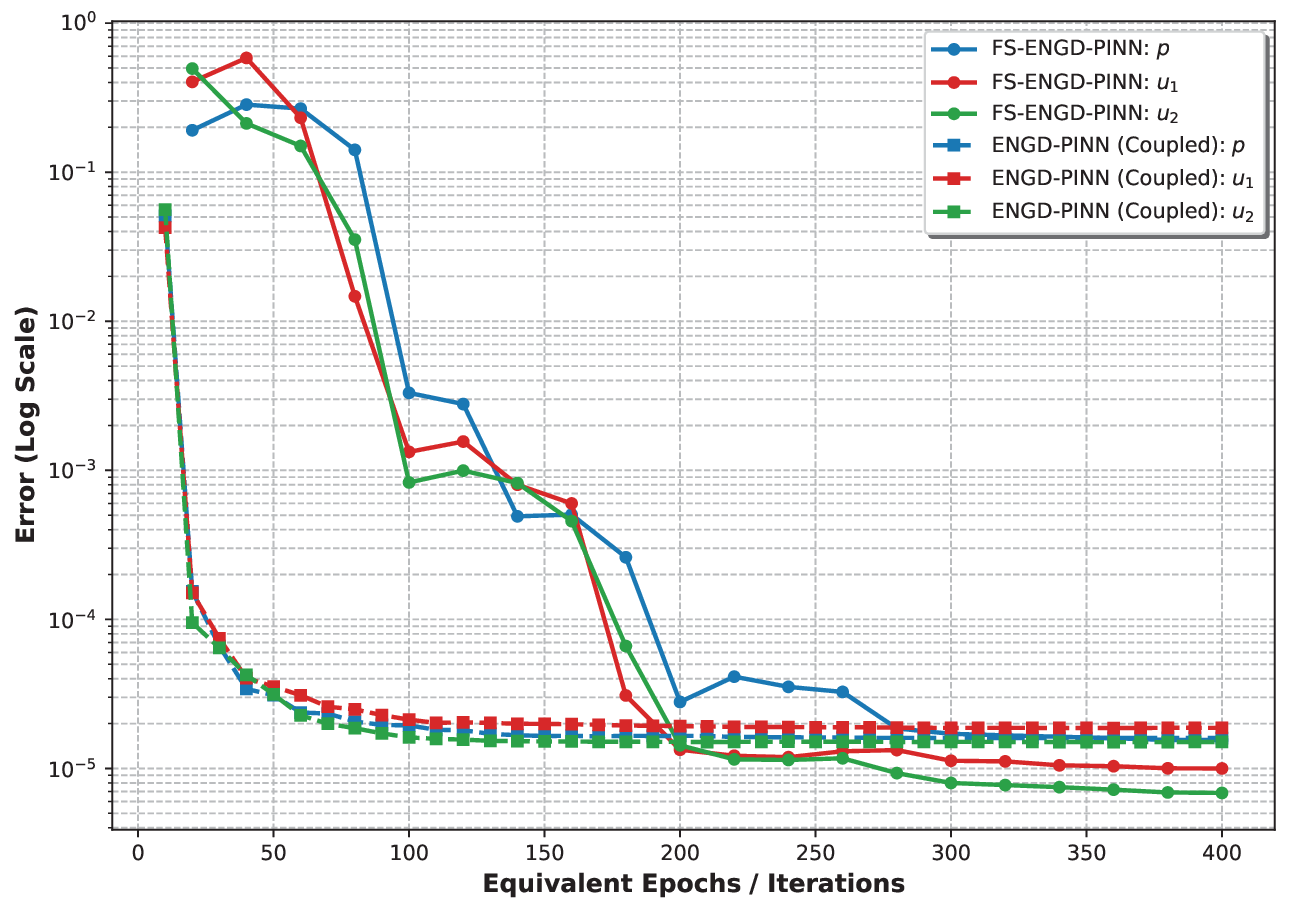}
        \centerline{(b) Convergence history of $H^1$ errors}
        \label{fig:comparison_H1_Error}
    \end{minipage}
    \caption{Convergence history of relative $L^2$ and $H^1$ errors over "Equivalent Epochs". (a) Convergence history of relative $L^2$ errors (left). (b) Convergence history of $H^1$ errors (right).}
    \label{fig:comparison_errors}
\end{figure}

\noindent\textbf{Discussion of Results.}
We first analyze the impact of the optimizer by comparing FS-PINN (Adam) and FS-ENGD-PINN. 
As shown in Table \ref{tab:comparison_2Dlinear}, although the standard FS-PINN with Adam records the shortest runtime (252.62 s), it stagnates at a relative $L^2$ error on the order of $10^{-4}$ for pressure and $10^{-3}$ for displacement, despite being trained for significantly more epochs. 
This highlights that the computational speed of first-order optimizers comes at the cost of accuracy; they struggle to handle the ill-conditioned loss landscape of Biot's equations, rendering the solution insufficient for high-precision requirements.

Next, we evaluate the efficiency-accuracy trade-off between the proposed FS-ENGD-PINN and the coupled ENGD-PINN.
While the total runtime of our method (385.71 s) is slightly higher than that of the coupled approach (317.18 s) for the full training cycle, this metric entails a significant gain in final accuracy.
More importantly, from the perspective of time-to-accuracy, our method demonstrates superior efficiency.
As visualized in Figure \ref{fig:comparison_errors}, the FS-ENGD-PINN surpasses the final converged accuracy of the coupled ENGD-PINN as early as the 15th FS iteration (equivalent to 300 epochs).
At this checkpoint, the cumulative runtime of our method is only \textbf{293.77 s}, which is faster than the total time required by the coupled ENGD-PINN.
This indicates that the physical decoupling provided by the Fixed-Stress scheme not only helps the network escape local minima but also allows it to reach a target high-fidelity threshold more efficiently than the monolithic approach.

\subsubsection{Example 2: 3D Linear Biot's Problem}
We extend the 2D configuration of Example 1 to a three-dimensional domain $\Omega = [0,1]^3$. The source terms are set to be:
\begin{align*}
    f(x,y,z,t)&=(c_0+3K)\sin(x+y+z)e^t+\alpha(x+y+z),\\
    \boldsymbol{g}(x,y,z,t)&=\left(-(\lambda+2\mu)t+\alpha\cos(x+y+z)e^t\right)(1,1,1)^T.
\end{align*}
The Dirichlet boundary conditions are applied on faces perpendicular to the $x$ and $y$ axes ($\Gamma_1 \cup \Gamma_2$), while Neumann conditions are applied on faces perpendicular to the $z$ axis ($\Gamma_3$). 
The exact solutions are given by:
\begin{align*}
    p(x,y,z,t)&=\sin(x+y+z)e^t, \quad\boldsymbol{u}(x,y,z,t)=\frac{t}{2}(x^2,y^2,z^2)^T.
\end{align*}
The corresponding Neumann boundary conditions on $\Gamma_3$ are derived as:
\begin{align*}
    (\boldsymbol{\sigma}(\boldsymbol{u}) - \alpha p \mathbb{I} )\cdot \boldsymbol{n} &= \boldsymbol{h},\quad \nabla p\cdot\boldsymbol{n}=\cos(x+y+z)e^t(n_1+n_2+n_3),
\end{align*}
where $\boldsymbol{h}=2\mu(xn_1,yn_2,zn_3)^T t+\lambda(x+y+z)(n_1,n_2,n_3)^T t-\alpha\sin(x+y+z)(n_1,n_2,n_3)^T e^t$.

For this 3D case, the number of collocation points is increased to $N_f=2000, N_{bD}=1000, N_{bN}=500, N_0=1000$, and the evaluation grid is $N_{eval}=160,000$.
Figure \ref{fig:3Dlinear} compares the predicted and exact solutions for the pressure and displacement fields at $T_{final}$. The FS-ENGD-PINN framework achieves excellent agreement with the analytical solution, demonstrating its capability to handle higher-dimensional problems effectively.

\begin{figure}[!htb]
\centering
\includegraphics[width=\linewidth]{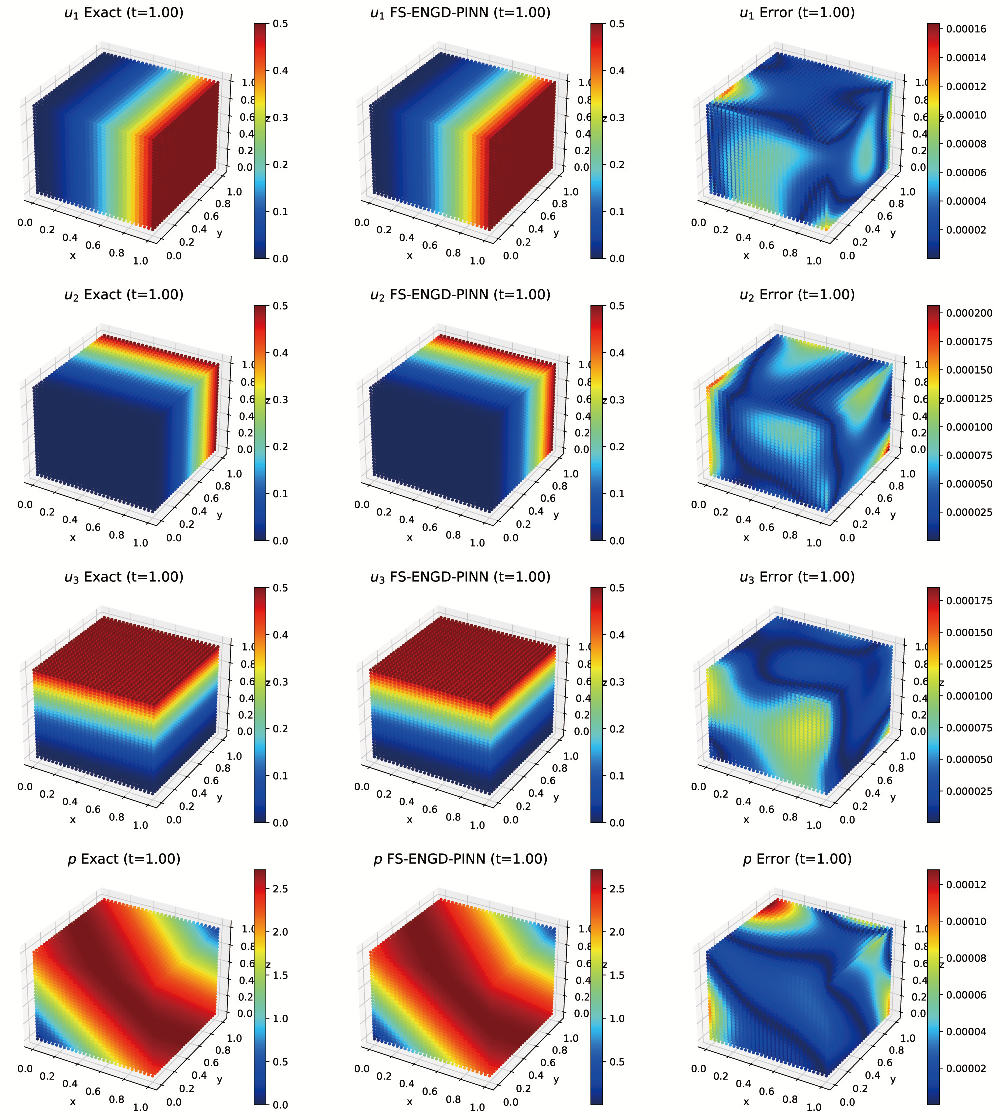}
\caption{Comparison of the predicted and exact solutions for Example 2.}\label{fig:3Dlinear}
\end{figure}

\subsubsection{Example 3: Problem with Complex Spatiotemporal Solution}
To test the solver's ability to capture intricate wave-like patterns, we construct a problem with trigonometric solutions. 
The source terms are defined as:
\begin{align*}
f(x,y,t) &= e^{-t} \left( (-c_0 + 2\pi^2 K) \sin(\pi x) \sin(\pi y) - \frac{\alpha \pi}{\mu + \lambda} \sin(\pi(x+y)) \right), \\[5pt]
\boldsymbol{g}(x,y,t) &= e^{-t} \begin{pmatrix} 
    \begin{aligned}
        &4\mu\pi^2 \sin(2\pi y)(2 \cos(2\pi x) - 1) \\
        &+ \left( \frac{2\mu\pi^2}{\mu+\lambda}\sin(\pi x) + \alpha\pi \cos(\pi x) \right) \sin(\pi y) - \pi^2 \cos(\pi(x+y))
    \end{aligned} 
    \\[20pt] 
    \begin{aligned}
        &4\mu\pi^2 \sin(2\pi x)(1 - 2 \cos(2\pi y)) \\
        &+ \left( \frac{2\mu\pi^2}{\mu+\lambda}\sin(\pi y) + \alpha\pi \cos(\pi y) \right) \sin(\pi x) - \pi^2 \cos(\pi(x+y))
    \end{aligned}
\end{pmatrix},
\end{align*}
and the exact solutions are:
\begin{align*}
    p(x,y,t)&=e^{-t}\sin(\pi x)\sin(\pi y),\\
    \boldsymbol{u}(x,y,t)&=e^{-t}\begin{pmatrix} 
        \sin(2\pi y)(\cos(2\pi x)-1)+\frac{1}{\lambda+\mu}\sin(\pi x)\sin(\pi y) \\ 
        \sin(2\pi x)(1 - \cos(2\pi y)) + \frac{1}{\lambda+\mu}\sin(\pi x)\sin(\pi y) \end{pmatrix}.
\end{align*}
The boundary conditions are configured to match the exact solution, with Neumann conditions for traction and flux applied on $\Gamma_2 \cup \Gamma_4$, and Dirichlet conditions elsewhere.
Figure \ref{fig:2Dcomplex} visualizes the results at $t=1.0$. 
Even with the complex spatiotemporal variations, the proposed method yields highly accurate predictions.


\begin{figure}[!htb]
\centering
\includegraphics[width=\linewidth]{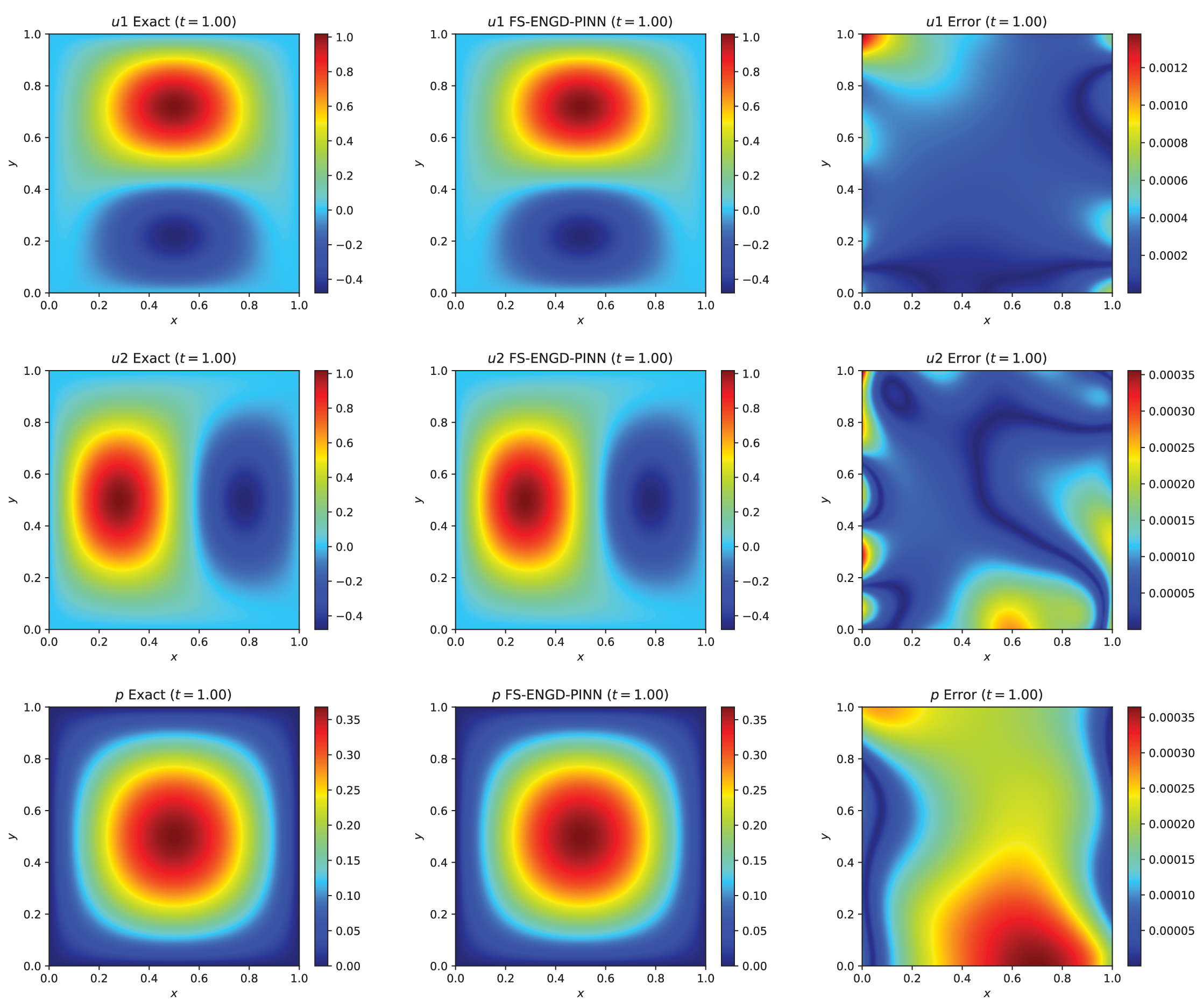}
\caption{Comparison of the predicted and exact solutions for Example 3.}\label{fig:2Dcomplex}
\end{figure}



Table \ref{tab:summary_ex23} summarizes the quantitative evaluation metrics for Examples 2-3. 
The proposed framework consistently achieves low relative $L^2$ and $H^1$ errors across 3D and complex scenarios, demonstrating its versatility and accuracy.

\begin{table}[!htb]
    \centering
    \caption{Summary of evaluation metrics and runtime for Example 2 and 3 using FS-ENGD-PINN.}
    \label{tab:summary_ex23}
    \setlength{\tabcolsep}{6pt}
    \begin{tabular}{lccccc}
        \toprule
        \multirow{2}{*}{Case} & \multicolumn{2}{c}{Rel$L^2$ Error} & \multicolumn{2}{c}{$H^1$ Error} & \multirow{2}{*}{Time (s)} \\
        \cmidrule(lr){2-3} \cmidrule(lr){4-5}
         & $p$ & $u_{mean}$ & $p$ & $u_{mean}$ & \\
        \midrule
        Ex 2 (3D Linear)   & 6.9702e-06 & 1.0996e-04 & 6.3861e-05 & 1.0237e-04 & 593.51 \\
        Ex 3 (Complex)     & 2.2094e-04 & 2.2388e-04 & 3.0916e-04 & 8.3799e-04 & 499.65 \\
        \bottomrule
    \end{tabular}
\end{table}

\subsubsection{Example 4: The Mandel Problem}
Consider the classical Mandel problem \cite{mandel1953consolidation, abousleiman1996mandel} to validate the solver's capability in capturing the non-monotonic \textit{Mandel-Cryer effect}. 
The physical setup consists of an infinitely long poroelastic slab sandwiched between two rigid, frictionless, and impermeable plates subjected to a constant vertical total load of $2F$.
Consequently, the effective load applied to the top boundary of the computational domain is $F$.
Due to symmetry, we simulate the first quadrant $\Omega = [0, 1] \times [0, 1]$. 
The exact solutions for displacement and pressure fields are given by:
\begin{align*}
    U_1(t)&=\left[\frac{F\nu}{2\mu}-\frac{F\nu_u}{\mu}\sum_{n=1}^{\infty}\frac{\sin\alpha_n\cos\alpha_n}{\alpha_n-\sin\alpha_n\cos\alpha_n}e^{-\alpha_n^2c_ft}\right]x\\
    &\quad+\frac{F}{\mu}\sum_{n=1}^{\infty}\frac{\cos\alpha_n\sin(\alpha_nx)}{\alpha_n-\sin\alpha_n\cos\alpha_n}e^{-\alpha_n^2c_ft},\\
    U_2(t)&=\left[\frac{-F(1-\nu)}{2\mu}+\frac{F(1-\nu_u)}{\mu}\sum_{n=1}^{\infty}\frac{\sin\alpha_n\cos\alpha_n}{\alpha_n-\sin\alpha_n\cos\alpha_n}e^{-\alpha_n^2c_ft}\right]y,\\
    P(t)&=\frac{2FB(1+\nu_u)}{3}\sum_{n=1}^{\infty}\frac{\sin\alpha_n}{\alpha_n-\sin\alpha_n\cos\alpha_n}(\cos(\alpha_nx)-\cos\alpha_n)e^{-\alpha_n^2c_ft},
\end{align*}
where $\alpha_n$ is the positive roots of $\tan\alpha_n=\frac{1-\nu_u}{\nu_u-\nu}\alpha_n$, and the other parameters are set as shown in Table \ref{tab:mandel_parameters}.

\begin{table}[!htb]
    \centering
    \caption{Physical parameters for the Mandel problem \cite{gu2024crank}.}
    \label{tab:mandel_parameters}
    \begin{tabular}{lcc}
        \toprule
        Symbol & Description & Value \\
        \midrule
        $T$ & Total simulation time & $1.0~s$ \\
        $\lambda$ & First Lamé parameter & $1.65 \times 10^9~Pa$ \\
        $\mu$ & Second Lamé parameter & $2.475 \times 10^9~Pa$ \\
        $\alpha$ & Biot-Willis constant & $1.0$ \\
        $c_0$ & Specific storage & $6.061 \times 10^{-11}~Pa^{-1}$ \\
        $K$ & Hydraulic conductivity & $9.869 \times 10^{-11}~m^2Pa^{-1}$ \\
        $F$ & Applied load & $6.0 \times 10^8~Nm^{-1}$ \\
        $B$ & Skempton's coefficient & $0.833$ \\
        $\nu_u$ & Undrained Poisson's ratio & $0.44$ \\
        $c_f$ & Fluid diffusivity & $0.465~m^2s^{-1}$ \\
        \bottomrule
    \end{tabular}
\end{table}

The initial and boundary conditions are set as follows:
\begin{align*}
    u_1(0)=\frac{F\nu_u}{2\mu}x,\quad u_2(0)&=\frac{-F(1-\nu_u)}{2\mu}y,\quad p(0)=\frac{FB(1+\nu_u)}{3},\quad\text{in}~\Omega\times\{0\},\\
    p&=0,\quad\boldsymbol{\sigma}_{11}=0, \quad\text{on}~\Gamma_1\times(0,T],\\
    \frac{\partial p}{\partial y}&=0,\quad\boldsymbol{\sigma}_{12}=0,\quad u_2=0,\quad\text{on}~\Gamma_2\times(0,T],\\
    \frac{\partial p}{\partial x}&=0,\quad\boldsymbol{\sigma}_{12}=0,\quad u_1=0,\quad\text{on}~\Gamma_3\times(0,T],\\
    \frac{\partial p}{\partial y}&=0,\quad\boldsymbol{\sigma}_{12}=0,\quad u_2=U_2,\quad\text{on}~\Gamma_4\times(0,T].
\end{align*}

\textbf{Normalization Strategy:} Given the vast disparity in physical parameters ($\lambda,\mu\propto 10^9$ vs. $c_0,K\propto10^{-11}$), direct training leads to severe gradient imbalances. 
To mitigate this numerical stiffness, we solve for the non-dimensionalized pressure $\hat{p}$ and displacements $\hat{\boldsymbol{u}}$ normalized by the applied load $F$:
\begin{align*}
    \hat{p} = p/F, \quad \hat{\boldsymbol{u}} = \boldsymbol{u} \cdot (\mu/F).
\end{align*}
This transformation scales the solution space to $O(1)$.
Furthermore, the PDE coefficients are scaled accordingly to balance the loss terms to $O(1)$, ensuring stable convergence.
We employ networks of sizes $[3,48,48,2]$ for $\hat{\boldsymbol{u}}$ and $[3,48,48,1]$ for $\hat{p}$, trained over 50 FS iterations with 30 epochs per subproblem.
The weights for the loss function are chosen as $w_f=1.0,w_{bD}=100.0,w_{bN}=100.0,w_0=5$.

Figure \ref{fig:mandel} shows the agreement between predicted and exact normalized solutions at $t=1.0$.
Crucially, Figure \ref{fig:pressure_analysis} tracks the pressure evolution, accurately capturing the initial rise and subsequent drop characteristic of the Mandel-Cryer effect.
\begin{figure}[!htb]
\centering
\includegraphics[width=\linewidth]{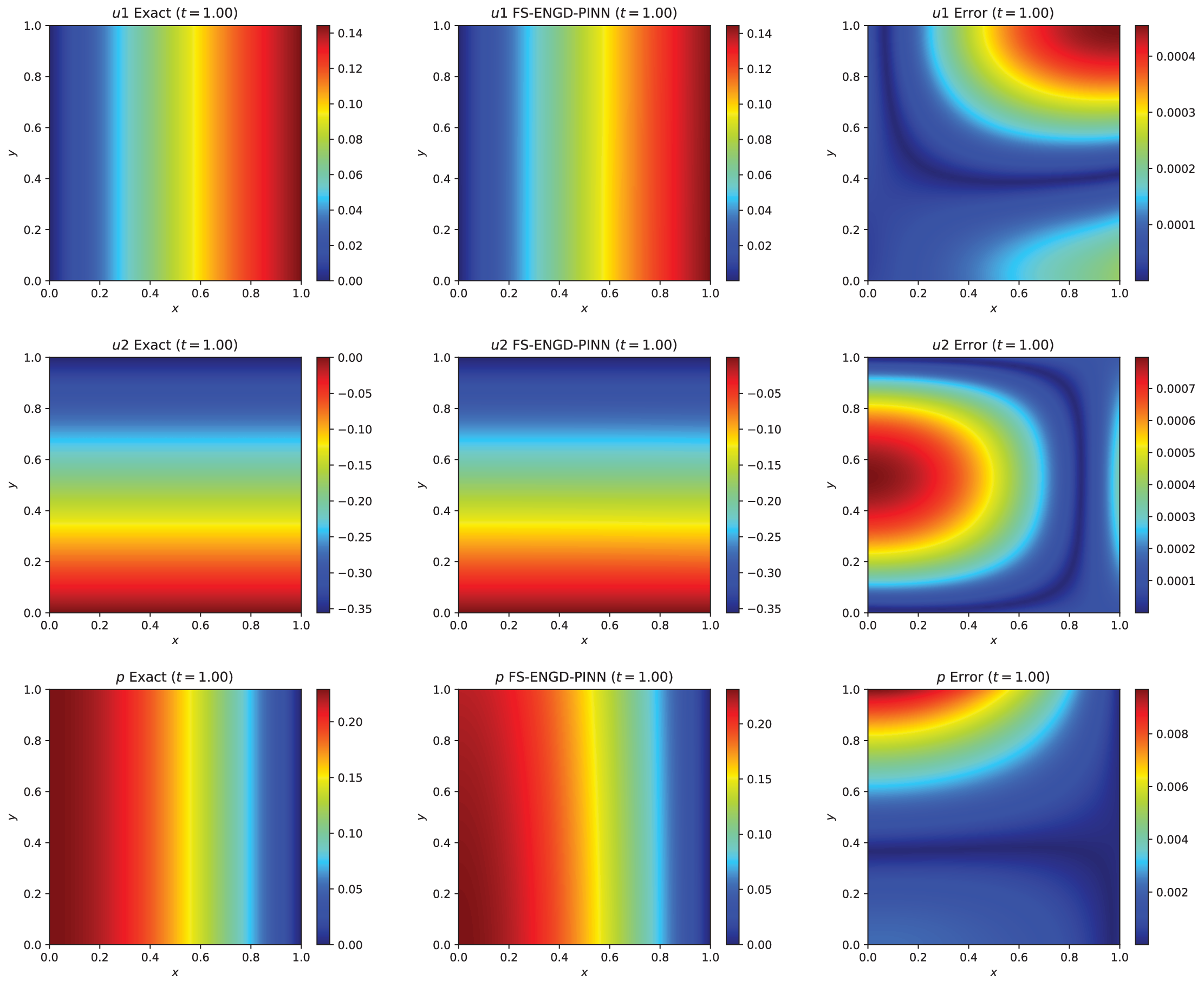}
\caption{Comparison of the normalized predicted and exact solutions for Example 4.}\label{fig:mandel}
\end{figure}

\begin{figure}[!htb]
\centering
\includegraphics[width=12cm]{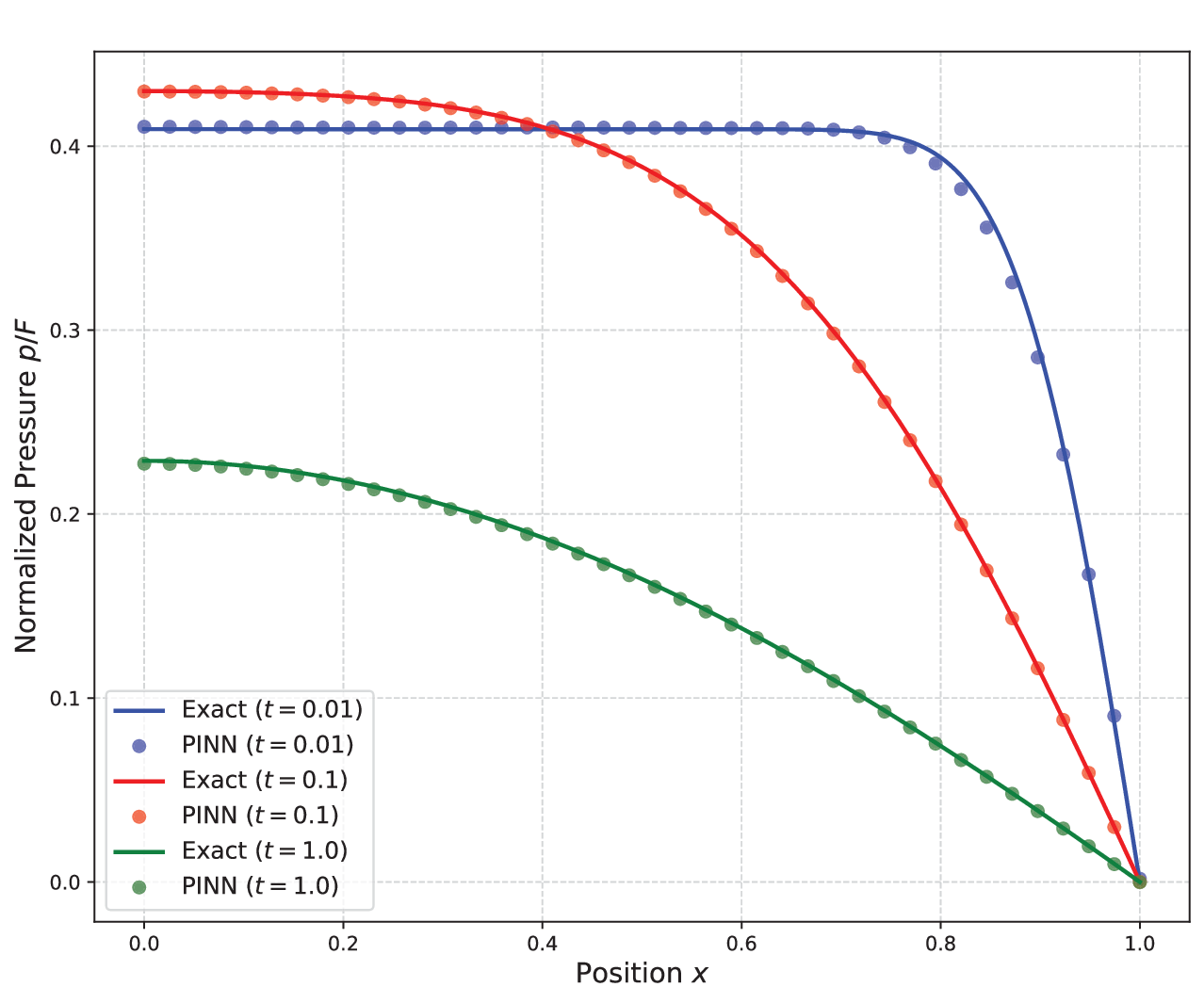}
\caption{Temporal evolution of normalized pressure for Example 4, capturing the Mandel-Cryer effect.}\label{fig:pressure_analysis}
\end{figure}

\subsubsection{Example 5: Layered Terzaghi's Problem}
To assess the performance of the proposed FS-ENGD-PINN framework in handling discontinuous material properties, we consider a two-layered Terzaghi consolidation problem \cite{lee1992study}.
Following the well-established benchmark practices in computational poromechanics for validating 1D consolidation within multi-dimensional fully coupled frameworks \cite{white2008stabilized, walker2023multiphysics}, we formulate the problem within a 2D rectangular domain $\Omega = (0,1) \times (0,1)$, which is partitioned into a top layer $\Omega_1=(0,1) \times (0.5,1)$ and a bottom layer $\Omega_2=(0,1) \times (0,0.5)$.
To recover the classical 1D uniaxial strain state and vertical fluid flow, symmetric roller boundaries ($u_1 = 0$) and zero-flux conditions ($\nabla p\cdot\boldsymbol{n}=0$) are rigorously enforced on the left and right boundaries $\Gamma_1 \cup \Gamma_3$. 
A constant compressive traction $\boldsymbol{\sigma}_{22}=-q_0$ is applied on the top boundary $\Gamma_4$ with a fully drained condition ($p=0$), while the bottom boundary $\Gamma_2$ is fixed ($\boldsymbol{u} = \mathbf{0}$) and impermeable ($\nabla p\cdot\boldsymbol{n}=0$).
We assume $\alpha=1.0, c_0=0.0$.
The material parameters are set to $\lambda_1=1.0, \mu_1=0.5, \kappa_1=1.0$ in $\Omega_1$, and $\lambda_2=2.0, \mu_2=1.0, \kappa_2=0.05$ in $\Omega_2$. This yields a permeability ratio of 20:1 across the material interface $y=0.5$.

\begin{remark}
    The material parameters chosen herein are of order $\mathcal{O}(1)$. Unlike the Mandel problem, which addresses global ill-conditioning from extreme scale disparities, this benchmark tests the solver's robustness against a local loss of regularity. The $20:1$ permeability jump strictly enforces a spatial weak discontinuity, where the pressure field $p$ remains $C^0$-continuous but exhibits a sharp gradient jump $\nabla p$ to maintain flux continuity.
\end{remark}

Directly fitting such sharp gradient jumps using standard MLPs with smooth activation functions (e.g., $\tanh$) inherently induces severe spectral bias near the interface. To circumvent this, the input layer is augmented with three analytical \textit{ans\"atze} to explicitly embed the singular structures into the network architecture:
\begin{enumerate}
    \item \textbf{Flux-preserving kink} $\phi_{kink}(y)$: Defined as $0.05(y-0.5)$ for $y > 0.5$ and $1.0(y-0.5)$ for $y \le 0.5$. This exactly encodes the $1:20$ pressure gradient discontinuity mandated by the permeability ratio, relieving the optimizer from discovering the $C^0$ kink from scratch.
    \item \textbf{Interface indicator} $\phi_{step}(y) = \text{sigmoid}((y-0.5)/0.02)$: A differentiable soft-step function localizing the material transition, which aids the network in decoupling the distinct mechanical responses in $\Omega_1$ and $\Omega_2$.
    \item \textbf{Early-time boundary layer} $\phi_{bl}(y,t) = \text{erf}\left((1-y)/\sqrt{8t+10^{-10}}\right)$: Derived from the classical similarity solution of the 1D diffusion equation on a semi-infinite domain \cite{wang2000theory}. 
    For 1D Terzaghi consolidation, the similarity variable typically incorporates the denominator $\sqrt{4c_{v}t}$. 
    In our formulation, the consolidation coefficient of the upper layer $\Omega_1$ is analytically computed as $c_{v,1} = \kappa_1 / (c_0 + \alpha^2/(\lambda_1+2\mu_1)) = 2.0$, which yields exactly $\sqrt{4c_{v,1}t} = \sqrt{8t}$. 
    A small numerical tolerance of $10^{-10}$ is introduced to prevent division by zero at $t=0$. This analytical feature strictly captures the infinite spatial derivative at the drained top boundary ($y=1$) as $t \to 0^+$, thereby effectively neutralizing the initial temporal singularity.
\end{enumerate}

Feeding these augmented features $\boldsymbol{h}_{in} = (x, y, t, \phi_{kink}, \phi_{step}, \phi_{bl})$ maps the ill-conditioned target solution onto a smoother, easily learnable residual manifold.

For the network configuration, both fields utilize 2 hidden layers with 40 neurons each. 
At each FS iteration, collocation points are uniformly resampled: $N_{pde} = 5000$, $N_{ic} = 2000$, $N_{intf} = 1000$, and $N_{bc} = 500$ per boundary, where $N_{intf}$ clusters points at $y=0.5$ to resolve the weak discontinuity. 
Loss weights are uniformly set to $1.0$ for PDE residuals, while the boundary, interface, and initial terms are penalized with weights $100.0$, $100.0$, and $150.0$, respectively, to enforce constraints during early transient stages.

While the preceding homogeneous examples primarily contend with boundary layers or initial temporal singularities, the layered Terzaghi's problem introduces a persistent internal spatial weak discontinuity.
As the optimization approaches the numerical noise floor, the sequential interactions between the fluid and solid networks typically excite late-stage micro-oscillations across the material interface.
Consequently, blindly extracting the model at the final prescribed iteration may yield a suboptimal state corrupted by these local oscillations.
To rigorously identify the optimal state for such highly heterogeneous systems without relying on exact analytical solutions, we propose an unsupervised physical validation strategy.
Specifically, we generate a dense, deterministic spatio-temporal validation grid that explicitly avoids the $t=0$ singularity to prevent unbounded time-derivatives during evaluation.
At the end of each FS iteration, we evaluate the monolithic, fully-coupled mass conservation residual of the original Biot's PDE (excluding the artificial stabilization term $\beta_{FS}$) on this validation grid.

To ensure sufficient convergence for this heterogeneous problem, the maximum number of FS iterations was increased to 100.
During the training process, we observed that while the macroscopic fields reach a satisfactory global equilibrium around the 50th iteration, resolving the steep local gradients near the material interface at early transient stages demands further optimization.
Standard evaluation metrics or visual inspections might prematurely halt the training at this pseudo-converged state.
Designed specifically for heterogeneous media where interfacial errors dominate, our validation strategy effectively tracks this micro-refinement process. The minimum physical validation loss, which perfectly coincides with the global minimum of the true relative $L^2$ error, was robustly identified at the 96th iteration.

Figure \ref{fig:layered_terzaghi} presents the 1D centerline profile comparison at various snapshots ($t=0.01, 0.1$, and $1.0$). Furthermore, Figure \ref{fig:layered_terzaghi_contour} illustrates the 2D spatial contours of the predicted fields and their absolute errors at $t=1.00$, confirming high-fidelity convergence across the entire heterogeneous domain.

\begin{figure}[!htb]
\centering
\includegraphics[width=\linewidth]{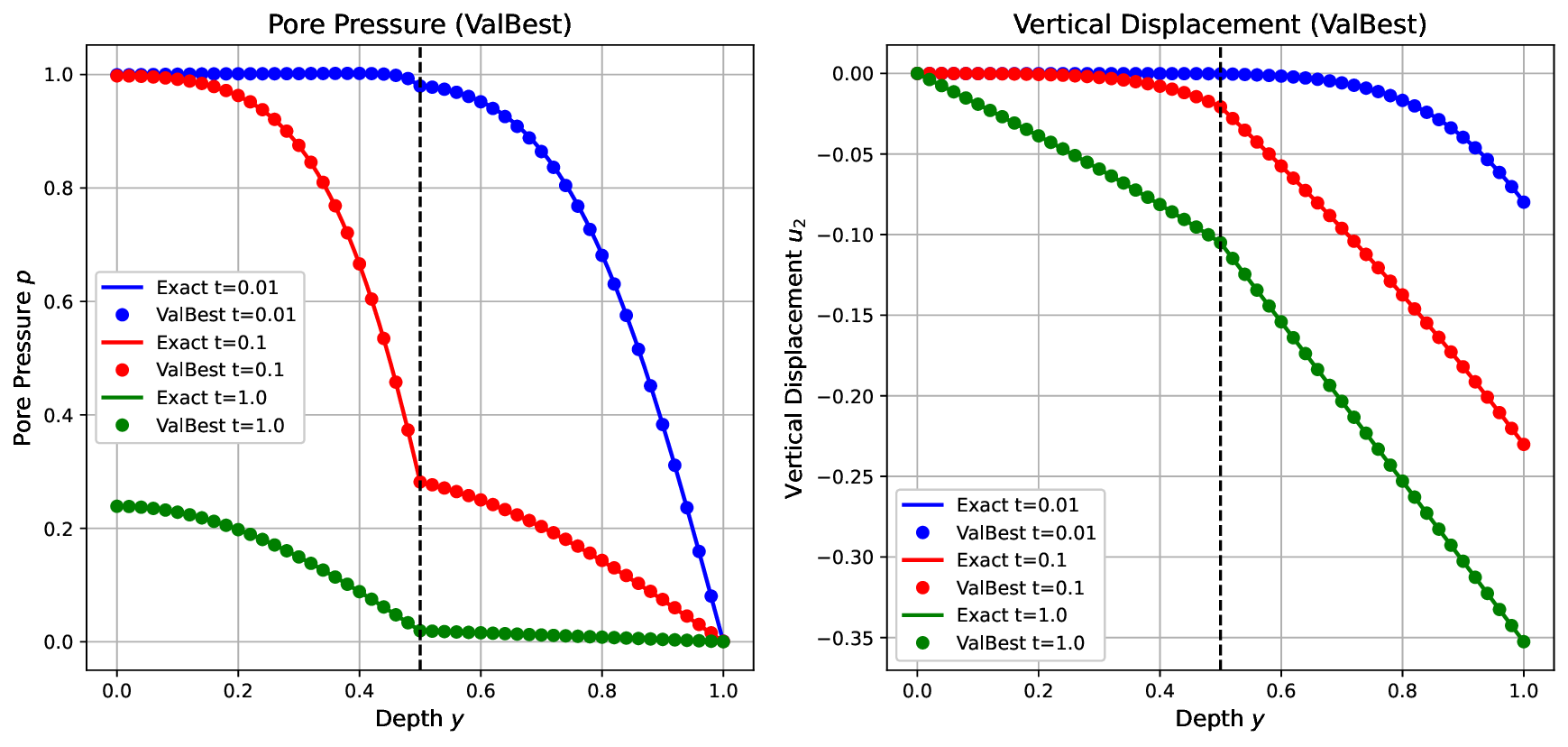}
\caption{Comparison of the physically-validated predictions (FS-ENGD-PINN) and exact solutions for the layered Terzaghi's problem along the centerline $x=0.5$. The left panel shows the pore pressure $p$, and the right panel displays the vertical displacement $u_2$ at selected time snapshots ($t=0.01, 0.1, \text{ and } 1.0$). The vertical dashed line at $y=0.5$ indicates the material interface.}
\label{fig:layered_terzaghi}
\end{figure}

\begin{figure}[!htb]
\centering
\includegraphics[width=\linewidth]{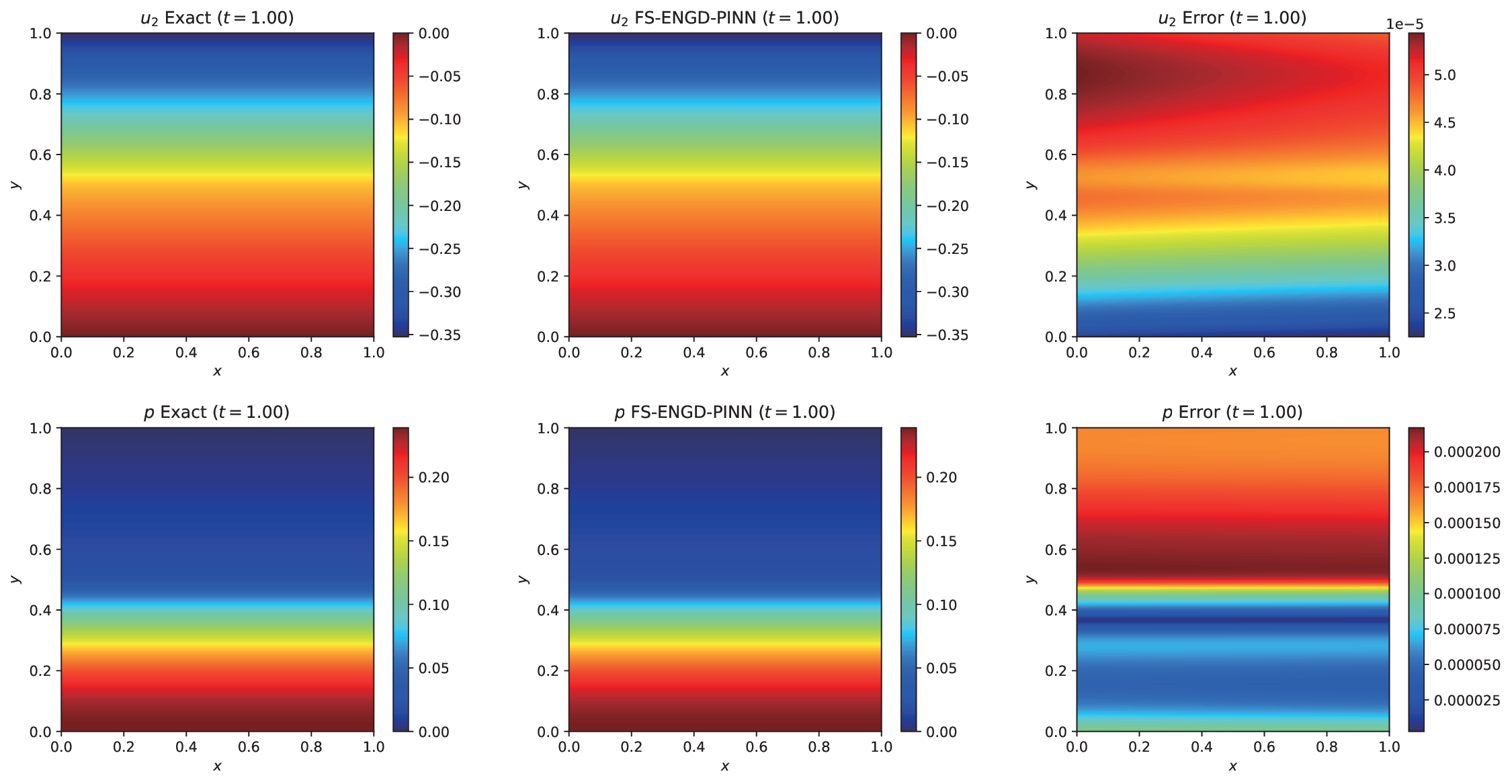}
\caption{Comparison of the predicted and exact solutions for Example 5.}
\label{fig:layered_terzaghi_contour}
\end{figure}

It is noteworthy to highlight the network's prediction of the horizontal displacement, $u_1$, which theoretically vanishes ($u_1 \equiv 0$) under uniaxial strain. 
Despite training within an unconstrained 2D domain, the FS-ENGD-PINN framework successfully precludes spurious lateral deformations, suppressing the absolute $L^2$ error of $u_1$ to $1.17 \times 10^{-6}$. 
This confirms that the solver rigorously respects the symmetric roller conditions and eliminates parasitic cross-contamination between displacement components.

\subsection{Performance near the Incompressible Limit (Three-field Model)}

In this subsection, we investigate the robustness of the proposed FS-ENGD-PINN framework for the three-field Biot's model as the material approaches the incompressible limit.
We adopt the geometric and source setup from Example 3 but vary the Poisson's ratio $\nu$ from $0.3$ to $0.4999999$.
Accordingly, the first Lam\'{e} parameter $\lambda$ increases from $O(1)$ to $O(10^6)$, introducing severe numerical stiffness and potential volumetric locking.

Table \ref{tab:incompressible_limit} summarizes the quantitative evaluation metrics for $\nu=0.4999$ and $\nu=0.4999999$. 
We explicitly compared the standard two-field formulation with our proposed three-field formulation.
For the two-field model, even when the number of FS iterations is increased to 50, it still shows obvious signs of volumetric locking. 
As shown in Table \ref{tab:incompressible_limit}, when $\nu=0.4999$, the two-field model started to have problems, and the relative $L^2$ errors for displacement reached the order of $10^{-1}$. 
More critically, at the limit value of $\nu=0.4999999$, the two-field model completely failed to capture the mechanical deformation, and the relative $L^2$ errors for displacement exceeded $1.0$, although the pressure field remained accurate.

\begin{table}[!htb]
    \centering
    \caption{Comparison of evaluation metrics between Two-field and Three-field models near the incompressible limit ($\nu \to 0.5$).}
    \label{tab:incompressible_limit}
    \renewcommand{\arraystretch}{1.2}
    \resizebox{\linewidth}{!}{
    \begin{tabular}{llcccc}
        \toprule
        $\nu$ & Model & Metric & $p$ & $u_1$ & $u_2$ \\
        \midrule
        \multirow{4}{*}{$0.4999$} 
        & \multirow{2}{*}{Two-field} 
          & Rel$L^2$ Error & 1.0168e-04 & 7.7842e-02 & 7.4665e-02 \\ 
          & & $H^1$ Error   & 1.3249e-04 & 4.8576e-01 & 3.5782e-01 \\
        \cmidrule{2-6}
        & \multirow{2}{*}{Three-field} 
          & Rel$L^2$ Error & 1.9694e-05 & 5.3202e-04 & 2.9748e-04 \\
          & & $H^1$ Error   & 6.3255e-05 & 1.5869e-03 & 1.2325e-03 \\
        \midrule
        \multirow{4}{*}{$0.4999999$} 
        & \multirow{2}{*}{Two-field} 
          & Rel$L^2$ Error & 1.1944e-05 & 1.0506e+00 & 9.9704e-01 \\ 
          & & $H^1$ Error   & 2.2414e-05 & 4.2156e+00 & 4.2051e+00 \\ 
        \cmidrule{2-6}
        & \multirow{2}{*}{Three-field} 
          & Rel$L^2$ Error & 3.2166e-05 & 2.9726e-04 & 1.7170e-04 \\
          & & $H^1$ Error   & 4.8186e-05 & 1.3911e-03 & 1.0159e-03 \\
        \bottomrule
    \end{tabular}
    }
\end{table}

To better understand this failure mode, we plot the convergence history of the component-wise errors for the two-field model in Figure \ref{fig:locking_convergence}.
Figure \ref{fig:locking_convergence} (a) shows that for $\nu=0.4999$, the displacement optimization is extremely sluggish, requiring significantly more FS iterations to reduce the error.
More alarmingly, Figure \ref{fig:locking_convergence} (b) shows that when $\nu=0.4999999$, although the pressure error (grey dashed line) converges rapidly, the displacement errors (solid line and crossed line) stagnate at a relative error of $1.0$ throughout the entire iteration process. This indicates that the strong constraint $\nabla\cdot\boldsymbol{u}\approx \boldsymbol{0}$ causes the optimizer to fall into a poor local minimum point. At this point, the neural network predicts a trivial zero displacement field to satisfy incompressibility, leading to a complete failure in capturing the mechanical response.

\begin{figure}[!htb]
    \centering
    \begin{minipage}[b]{0.48\textwidth}
        \centering
        \includegraphics[width=\linewidth]{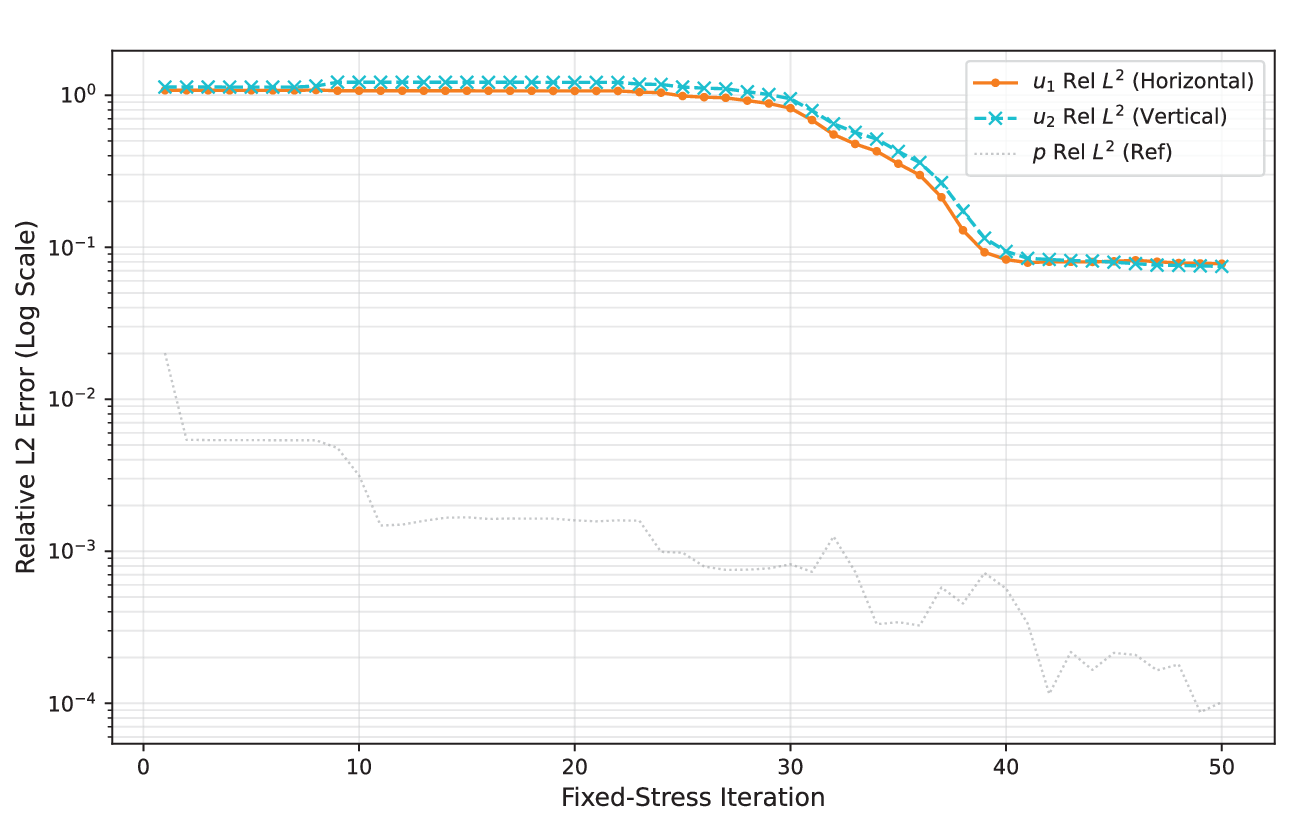}
        \centerline{(a) $\nu = 0.4999$}
    \end{minipage}
    \hfill
    \begin{minipage}[b]{0.48\textwidth}
        \centering
        \includegraphics[width=\linewidth]{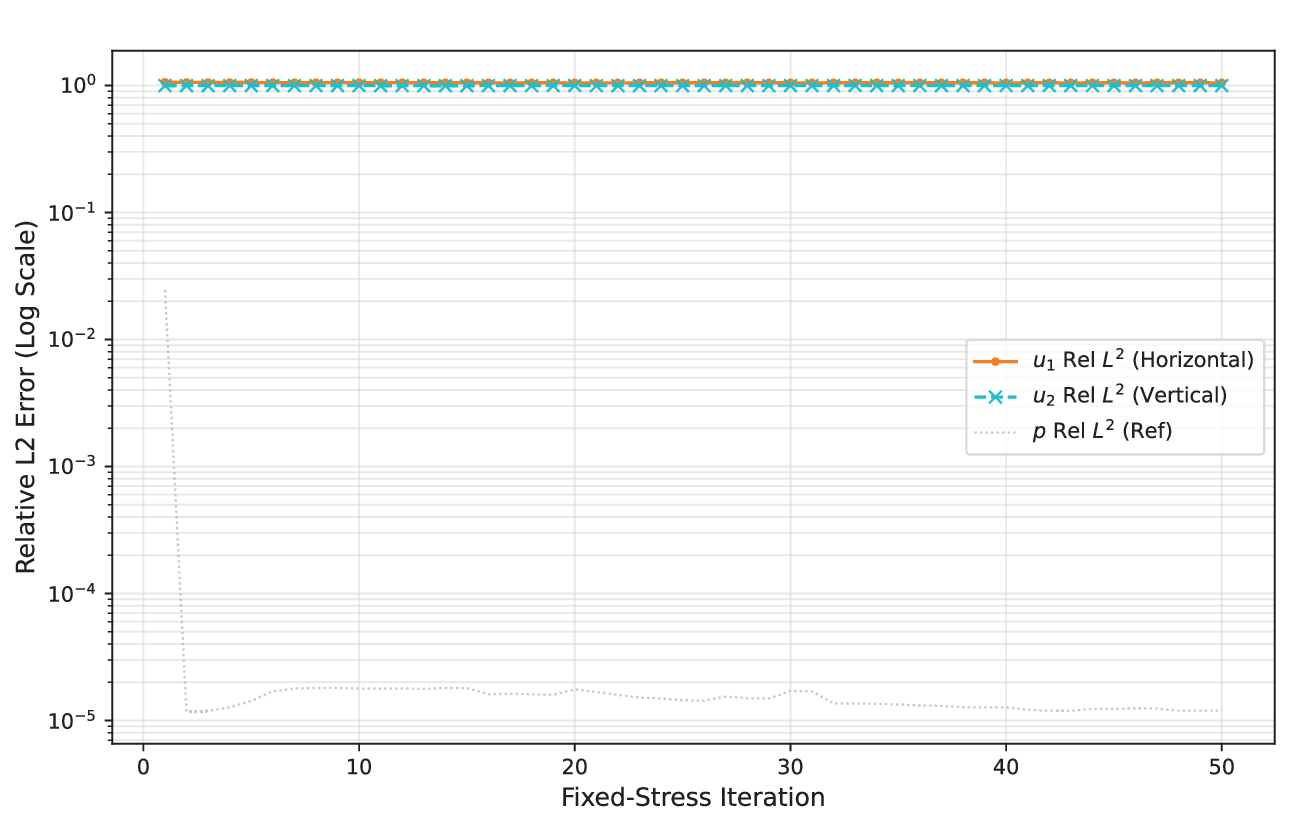}
        \centerline{(b) $\nu = 0.4999999$}
    \end{minipage}
    \caption{Convergence history of the relative $L^2$ errors for the \textbf{Two-field model} over 50 Fixed-Stress iterations. (a) At $\nu=0.4999$, displacement errors decrease slowly. (b) At $\nu=0.4999999$, displacement errors stagnate at $\approx 1.0$, indicating severe volumetric locking, while pressure errors converge independently.}
    \label{fig:locking_convergence}
\end{figure}

In stark contrast, the three-field formulation maintains exceptional accuracy and stability regardless of the Poisson's ratio.
As detailed in Table \ref{tab:incompressible_limit}, even at the extreme limit of $\nu=0.4999999$, the three-field model achieves relative $L^2$ errors for displacement on the order of $10^{-4}$, and $H^1$ errors on the order of $10^{-3}$.
This confirms that introducing the total pressure $\xi$ as an independent variable effectively alleviates volumetric locking, enabling accurate resolution of near-incompressible poroelastic problems.
Figure \ref{fig:example7} visually compares the predicted and exact solutions for $\nu=0.4999999$ of the three-field model, demonstrating excellent agreement.

\begin{figure}[!htb]
\begin{center}
\includegraphics[width=\linewidth]{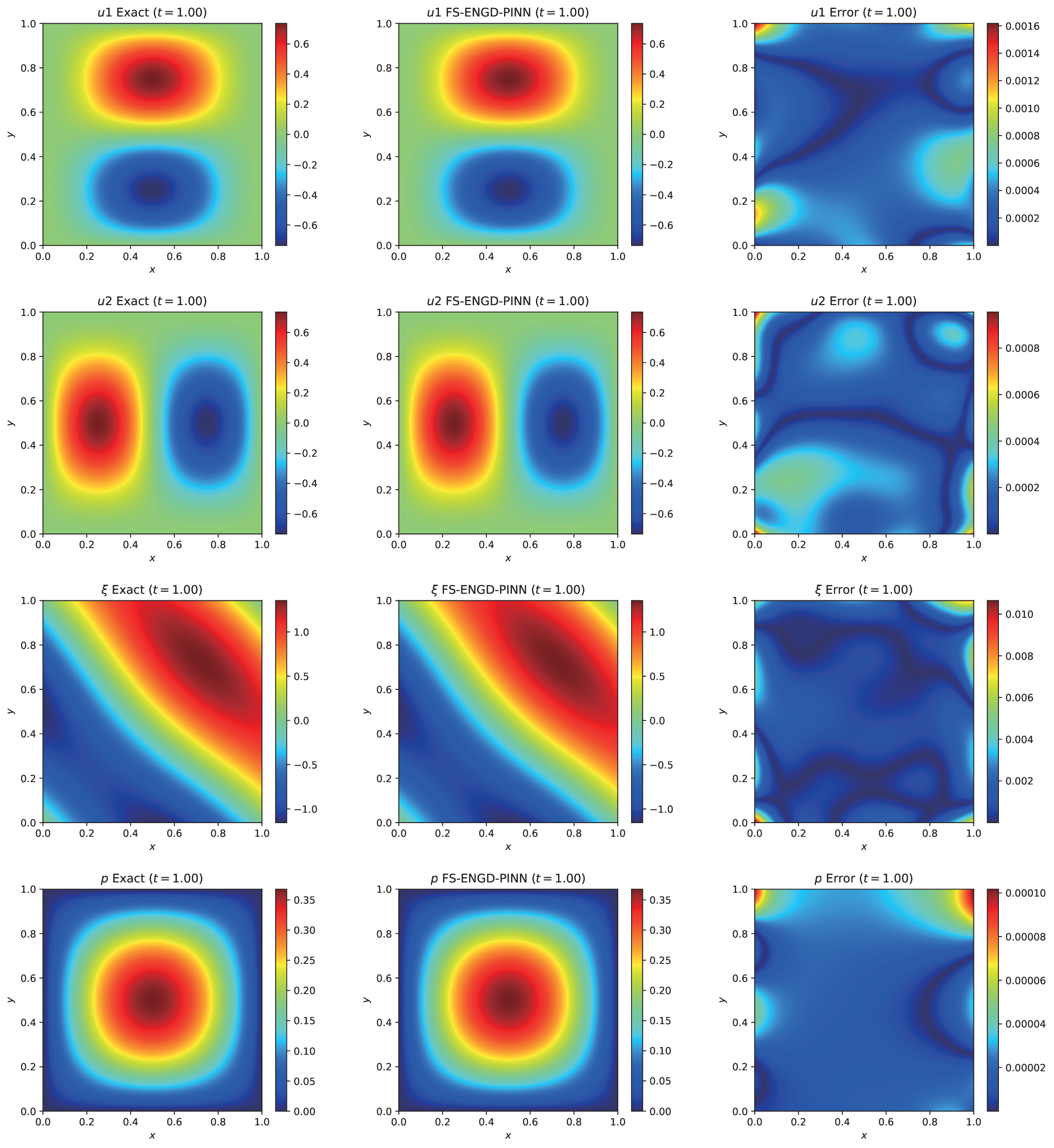}
\end{center}
\caption{Comparison of the predicted and exact solutions for the three-field model with $\nu=0.4999999$.}\label{fig:example7}
\end{figure}

\section{Conclusion and Future Work}\label{sec:conclusion}

In this work, we proposed FS-ENGD-PINN, a decoupled learning-based framework for Biot's poroelasticity that integrates the Fixed-Stress splitting scheme with Energy Natural Gradient Descent (ENGD). 
This two-level approach systematically resolves the numerical stiffness stemming from physical multi-physics coupling and optimization ill-conditioning. 
Numerical benchmarks demonstrate the framework's capability to resolve sharp initial temporal singularities, the non-monotonic Mandel-Cryer effect, and persistent internal weak discontinuities in layered heterogeneous media, while maintaining scalability in three dimensions. 
Furthermore, the three-field mixed formulation eliminates spurious pressure oscillations and successfully precludes volumetric locking near the incompressible limit.

Future research will focus on the following directions:
\begin{enumerate}
    \item Extending the decoupled framework to simulate field-scale heterogeneous reservoirs and coupled fluid-driven fracture propagation;
    \item Investigating nonlinear poromechanics involving finite elastic deformations or deformation-dependent permeability tensors;
    \item Solving inverse problems to robustly identify subsurface material parameters from sparse or noisy observational data;
    \item Integrating neural operators to enhance generalization across varying physical parameter ranges and domain geometries.
\end{enumerate}

\section*{Acknowledgments}
The work of KS and MF was supported in part by the National Natural Science Foundation of China (Grant No. 11971337). 
The work of MC was supported in part by the I-GAP Small Tech Transfer Grant
I-GAP OTT-STT-272, sponsored by the Office of Technology Transfer at Morgan State
University, and the support of The University of Maryland Baltimore Life Science
Discovery (UM-BILD) Accelerator and REACH Hub Award 1U01GM152511-03.


\bibliographystyle{unsrt}
\bibliography{cicp}

\end{document}